\theoremstyle{remark}
\newtheorem*{remark}{Remark}
\newtheorem*{definition}{Definition}
\newtheorem*{lemma}{Lemma}
\newtheorem*{Lemma}{Lemma}
\newtheorem*{theorem}{Theorem}
\newtheorem*{corollary}{Corollary}
\newtheorem*{example}{Example}
\newtheorem*{Example}{Example}
\begin{document}

\title{A brief review on geometry and spectrum of graphs}
\author{Yong Lin \and Shing-Tung Yau }
\date{April 12, 2012}
\maketitle

\setlength{\baselineskip}{1.25\baselineskip}

\setcounter{page}{1}

\section{Introduction}
This is a survey paper. We study the Ricci curvature and spectrum of graphs, as well as
the exterior forms and deRahm cohomology on graphs.

\section{Geometry and Spectral theory for Graphs}

\subsection{Ricci curvature on graphs}

Let $G=(V,E)$ be a graph, where $V$ is a vertices set and $E$ is the set of edges. For $x,y\in{V}$, $x\sim{y}$ means that $x$ is adjacent to $y$.Let $d_x$ denote the degree of the vertex $x$. If $d_x<+\infty$ for
all $x\in{V}$, we say that $G$ is a locally finite graph. If $d_x$
is same for every $x$, we say that the graph is a regular graph. For two
vertices $x$ and $y$, the distance between $x$ and $y$ is the number
of edges in the shortest path joining $x$ and $y$.The diameter of a graph $G$ is the maximum distance between any two vertices of $G$. We always assume that $G$ is connected, which means that any two vertices of
$G$ can be connected by a path in $G$.

The first definition of Ricci curvature was introduced by Fan Chung
Graham and S. T. Yau in 1996\cite{cy96}.  In the course of obtaining a good
log-Sobolev inequality, they found the following definition of Ricci
curvature to be useful:

We say that a regular graph $G$ has a local $k$-frame at a vertex $x$ if there exists injective mappings $\eta_1, \ldots, \eta_k$ from a neighborhoood of $x$ into $V$ so that
\begin{enumerate}
\item $x$ is adjacent to $\eta_ix$ for $1\leq i \leq k\,$; \\
\item $\eta_i \,x  \neq \eta_j \,x~~$ if $i \neq j\,$. \\
\end{enumerate}

The graph $G$ is said to be Ricci-flat at $x$ if there is a local $k$-frame in a neighborhood of $x$ so that for all $i\,$,
$$\bigcup_j \left(\eta_i \eta_j\right) x = \bigcup_j \left(\eta_j \eta_i\right) x~.$$

For a more general definition of Ricci curvature, we need the following.

We first define the Laplace operator on graph without loops and
multiple edges. The description in the following can be used for weighted graphs.
But for simplicity, we set all weights here equal to 1.

Let
$V^R=\{f|f:V\rightarrow{R}\}$. The Laplace operator $\Delta$ of a
graph $G$ is
$$\Delta{f(x)}=\frac{1}{d_x}\sum_{y\sim{x}}[f(y)-f(x)]$$
for all $f\in{V^R}$.  For graphs, we have
$$|\nabla{f(x)}|^2=\frac{1}{d_x}\sum_{y\sim{x}}[f(y)-f(x)]^2~.$$

We first introduce a bilinear operator
$\Gamma:V^R\times{V^R}\rightarrow{V^R}$ by
$$\Gamma(f,g)(x)=\frac{1}{2}\{\Delta(f(x)\cdot{g(x)})-f(x)\Delta{g(x)}-g(x)\Delta{f(x)}\}~.$$
The Ricci curvature operator $\Gamma_2$ is defined by iterating the
$\Gamma$:
$$\Gamma_2(f,g)(x)=\frac{1}{2}\{\Delta\Gamma(f,g)(x)-\Gamma(f,\Delta{g})(x)-\Gamma(g,\Delta{f})(x)\}~.$$
\begin{definition}
The Laplace operator $\Delta$ on graphs satisfies the
curvature-dimension type inequality $CD(m,K)$ $(m\in(1,+\infty])$ if
$$\Gamma_2(f,f)(x)\geq\frac{1}{m}(\Delta(f(x)))^2+k(x)\cdot\Gamma(f,f)(x)$$
We call $m$ the dimension of the operator $\Delta$ and $k(x)$ the
lower bound of the Ricci curvature of the operator $\Delta$.
\end{definition}

It is easy to see that for $m<m^{'}$, the operator $\Delta$
satisfies $CD(m^\prime,k)$ if it satisfies $CD(m,k)$.

\begin{remark}
We find that:
\begin{eqnarray*}
\Delta(|\nabla{f(x)}|^2)&=&\frac{1}{d_x}\sum_{y\sim{x}}\frac{1}{d_x}\sum_{z\sim{y}}[f(x)-2f(y)+f(z)]^2\\
&
&-\frac{2}{d_x}\sum_{y\sim{x}}\frac{1}{d_y}\sum_{z\sim{y}}[f(x)-2f(y)+f(z)][f(x)-f(y)]
\end{eqnarray*}

and
$$\Gamma(f,\Delta{f})(x)=\frac{1}{2}\frac{1}{d_y}\sum_{y\sim{x}}[f(y)-f(x)][\Delta{f(y)}-\Delta{f(x)}]~.$$
\end{remark}

From the definition of the Ricci curvature operator, we obtain:
\begin{eqnarray*}
\Gamma_2(f,f)(x)&=&\frac{1}{4}\frac{1}{d_x}\sum_{y\sim{x}}\frac{1}{d_y}\sum_{z\sim{y}}[f(x)-2f(y)+f(z)]^2\\
& &-\frac{1}{2}\frac{1}{d_x}\sum_{y\sim{x}}[f(y)-f(x)]^2
\\ & &+\frac{1}{2}[\frac{1}{d_x}\sum_{y\sim{x}}(f(y)-f(x))]^2~.
\end{eqnarray*}

We see that the lower bound of the Ricci curvature $k(x)$ only depends
on those vertices with distant at most 2 from vertex $x$. So we can
choose the function $f(x)$  to be supported only on those vertices.

\begin{itemize}
  \item Suppose $\Delta$ is the Laplace-Beltrami operator on a
  m-dimension complete connected Riemannian manifold $M$.

  \medskip

  Bochner's
  formula indicates that
  $$\Gamma_2(f,f)(x)=Ric(\nabla{f(x)},\nabla{f(x)})+\|Hessf(x)\|_2^2~,$$

  where $Ric$ is the Ricci tensor on $M$ and $\|Hessf\|_2$ is the
  Hilbert-Schmidt norm of the tensor of the second derivative of
  $f$. Since $\|Hessf\|_2^2\geq\frac{1}{m}(\Delta{f})^2$, so we say
  that $\Delta$ satisfies the $CD(m,k)$ if and only if the Ricci
  curvature at $x$ on $M$ is bounded below by $k$.

\item In 1985, Bakry and Emery\cite{be1985} considered these notions on metric
  measure space where the operator $\Delta$ is the so-called diffusion
  operator, that is $\Delta$ satisfies the following chain-rule
  formula: for every $C^\infty$ function $\Phi$ on $R$ and every
  function $f$,
  $$\Delta\phi(f)=\phi^{'}(f)\Delta{f}+\phi^{''}(f)\Gamma(f,f).$$
  The Laplace operator $\Delta$ on graphs does not satisfy the
  chain-rule formula. But by the following Lemma, it is the correct
  operator for defining the Ricci curvature operator on graphs.
\end{itemize}

In the classical case, $\Gamma(f,f)=\frac{1}{2}|\nabla(f)|^2$. On
   graphs, we also have

   \begin{Lemma}
   Suppose $G$ is a locally finite graph and $\Delta$ is the Laplace
   operator on $G$, then $$\Gamma(f,f)=\frac{1}{2}|\nabla{f}|^2.$$
   \end{Lemma}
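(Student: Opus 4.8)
The plan is to compute $\Gamma(f,f)(x)$ directly from its definition by specializing the bilinear formula to $g=f$ and then expanding every Laplacian in terms of its defining sum over neighbors. First I would set $g=f$ in
$$\Gamma(f,g)(x)=\frac{1}{2}\{\Delta(f(x)\cdot g(x))-f(x)\Delta g(x)-g(x)\Delta f(x)\}.$$
The two mixed terms $f\,\Delta g$ and $g\,\Delta f$ then coincide and collapse into a single term $2f(x)\Delta f(x)$, so that
$$\Gamma(f,f)(x)=\frac{1}{2}\left\{\Delta(f^2)(x)-2f(x)\Delta f(x)\right\}.$$

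Next I would expand each of the two remaining terms using $\Delta h(x)=\frac{1}{d_x}\sum_{y\sim x}[h(y)-h(x)]$. The first term becomes $\Delta(f^2)(x)=\frac{1}{d_x}\sum_{y\sim x}[f(y)^2-f(x)^2]$, while the second becomes $2f(x)\Delta f(x)=\frac{1}{d_x}\sum_{y\sim x}[2f(x)f(y)-2f(x)^2]$. Subtracting these inside the common factor $\frac{1}{d_x}$, the two $f(x)^2$ contributions combine so that each summand reduces to
$$f(y)^2-2f(x)f(y)+f(x)^2=[f(y)-f(x)]^2.$$
Pulling the factor $\frac{1}{d_x}$ back outside the sum recovers exactly the definition of $|\nabla f(x)|^2$, yielding $\Gamma(f,f)(x)=\frac{1}{2}|\nabla f(x)|^2$ as claimed.

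I do not expect a genuine obstacle here: the identity is a purely algebraic consequence of $\Gamma$ being constructed as the symmetrized (carr\'e du champ) form of $\Delta$, and the only point requiring care is the bookkeeping when the $f(x)^2$ terms from $\Delta(f^2)$ and from $2f(x)\Delta f(x)$ are combined. Local finiteness of $G$ guarantees that each sum $\sum_{y\sim x}$ is finite, so no convergence issue arises and the manipulation is valid at every vertex $x$ separately. This matches the continuous identity $\Gamma(f,f)=\frac{1}{2}|\nabla f|^2$ quoted just before the statement, confirming that the graph Laplacian is the correct operator for this theory even though it fails the chain-rule formula.
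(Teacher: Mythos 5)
Your proposal is correct and follows essentially the same route as the paper: both arguments specialize the definition of $\Gamma$ to $g=f$ and expand $\Delta(f^2)(x)$ as a sum over neighbors, the only cosmetic difference being that the paper factors $f^2(y)-f^2(x)=[f(y)-f(x)][f(y)+f(x)]$ while you subtract the $2f(x)\Delta f(x)$ term first and recognize the perfect square $[f(y)-f(x)]^2$. The bookkeeping and the role of local finiteness are handled just as in the paper's proof.
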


 \begin{proof}
    \begin{eqnarray*}
   \Delta(f^2(x))&=&\frac{1}{d_x}\sum_{y\sim{x}}[f^2(y)-f^2(x)]\\
   &=&\frac{1}{d_x}\sum_{y\sim{x}}[f(y)-f(x)]\cdot[f(y)+f(x)]\\
   &=&\frac{2}{d_x}\sum_{y\sim{x}}[f(y)-f(x)]\cdot{f(x)}+\frac{1}{d_x}\sum_{y\sim{x}}[f(y)-f(x)]^2\\
   &=&2f(x)\Delta{f(x)}+|\nabla{f(x)}|^2.
   \end{eqnarray*}
   So $$\Gamma(f,f)=\frac{1}{2}[\Delta(f)^2-2f\Delta{f}]=\frac{1}{2}|\nabla(f)|^2.$$
   \end{proof}

 For the Ricci curvature operator on graphs, we have the following
   theorem.

   \

   \begin{theorem}[Y. Lin,  S. T. Yau\cite{ly09}]
    Suppose $G$ is a locally finite graph and
   $d=\sup_{x\in{V}}d_x$ (where $d$ can be infinite). Then we have
   $$\Gamma_2(f,f)\geq\frac{1}{2}(\Delta{f})^2+(\frac{1}{d}-1)\Gamma(f,f)$$
   i.e. the Laplace operator $\Delta$ on $G$ satisfies
   $CD(2,\frac{1}{d}-1)$.
   \end{theorem}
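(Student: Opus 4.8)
The plan is to work directly from the explicit formula for $\Gamma_2(f,f)(x)$ recorded just before the theorem, and to reduce the claimed inequality to a single transparent estimate on the ``double sum'' term. The observation at the end of the excerpt that $k(x)$ depends only on vertices within distance $2$ of $x$ makes it enough to argue pointwise, vertex by vertex.

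First I would identify the last two terms of the $\Gamma_2$ formula with quantities already named. The term $\frac{1}{2}\left[\frac{1}{d_x}\sum_{y\sim x}(f(y)-f(x))\right]^2$ is exactly $\frac{1}{2}(\Delta f(x))^2$ by the definition of $\Delta$, and the term $-\frac{1}{2}\frac{1}{d_x}\sum_{y\sim x}[f(y)-f(x)]^2$ is exactly $-\Gamma(f,f)(x)$ by the preceding Lemma $\Gamma(f,f)=\frac{1}{2}|\nabla f|^2$. Writing $S$ for the remaining nonnegative term $\frac{1}{4}\frac{1}{d_x}\sum_{y\sim x}\frac{1}{d_y}\sum_{z\sim y}[f(x)-2f(y)+f(z)]^2$, the identity becomes $\Gamma_2(f,f)=S+\frac{1}{2}(\Delta f)^2-\Gamma(f,f)$. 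Subtracting the target right-hand side and cancelling $\frac{1}{2}(\Delta f)^2$, the theorem reduces to the single inequality $S\geq\frac{1}{d}\Gamma(f,f)$.

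The main step, carrying essentially all the content, is to bound $S$ from below by keeping only one well-chosen term in the inner sum. Since $y\sim x$ forces $x\sim y$, the index $z=x$ always occurs in $\sum_{z\sim y}$; for that index $f(x)-2f(y)+f(z)=2(f(x)-f(y))$, so its square contributes $4[f(y)-f(x)]^2$. Because every summand is a square and hence nonnegative, I may discard all terms with $z\neq x$ to obtain $S\geq\frac{1}{d_x}\sum_{y\sim x}\frac{1}{d_y}[f(y)-f(x)]^2$. Using $d_y\leq d$ together with the Lemma once more then yields $S\geq\frac{2}{d}\Gamma(f,f)\geq\frac{1}{d}\Gamma(f,f)$, which is what was needed. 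When $d=\infty$ the required bound degenerates to $S\geq 0$, which is immediate.

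I expect the only subtle point to be the recognition in the third paragraph that $z=x$ is a legitimate and distinguished choice of neighbor of $y$; everything else is either an algebraic rewriting through the definitions of $\Delta$ and the Lemma, or the monotone discarding of nonnegative terms. Notably the argument produces a factor $\frac{2}{d}$ rather than the claimed $\frac{1}{d}$, so the inequality in fact holds with room to spare and no delicate cancellation is required.
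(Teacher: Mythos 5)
Your proof is correct and takes essentially the same route as the paper's: it rests on the displayed formula for $\Gamma_2(f,f)$, keeps only the $z=x$ term of the inner sum (legitimate since $y\sim x$ implies $x\sim y$), and then uses $d_y\leq d$. Your bookkeeping in fact yields the slightly stronger constant $\frac{2}{d}-1$ in place of $\frac{1}{d}-1$, and the stated theorem follows from this since $\Gamma(f,f)\geq 0$.
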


  \
\begin{remark}

The Ricci-flat graph in the sense of Chung and Yau satisfies $CD(\infty,0)$.

\end{remark}

We also can obtain an eigenvalue estimate for the finitely connected
   graph.

Suppose a function $f(x)\in{V^R}$ satisfies
$$(-\Delta)f(x)=\lambda{f(x)}~,$$
    then $f(x)$ is called the eigenfunction of the Laplace operator
    $\Delta$ on $G$ with eigenvalue $\lambda$.

We prove the following result which is similar to the classical result of Li and Yau on compact manifold with Ricci curvature bounded below.

\medskip

    \begin{theorem}[Y. Lin,  S. T. Yau\cite{ly09}]
    Suppose $G$ is a connected graph with diameter $D$, then the non-zero
    eigenvalue of Laplace operator $\Delta$ on $G$
    $$\lambda\geq\frac{1}{d\, D(\exp(d\,D+1)-1)}~.$$
    \end{theorem}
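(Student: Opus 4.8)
The plan is to bypass a Bochner/maximum-principle gradient estimate and exploit the eigenfunction equation through the iterated averaging operator. A first instinct is to imitate the Riemannian Li--Yau argument by bounding $|\nabla f|^2$ from above using $CD(2,\frac1d-1)$ together with the maximum principle; I would \emph{not} pursue this. Because the curvature lower bound $\frac1d-1$ is negative, applying the maximum principle to $|\nabla f|^2$ at its maximum $x^{*}$ yields (after substituting $\Delta f=-\lambda f$ into the curvature-dimension inequality) only a \emph{lower} bound $|\nabla f(x^{*})|^2\ge \lambda^2 f(x^{*})^2/(\lambda-\frac1d+1)$, which is the wrong direction, and on a graph there is no first-order condition at a maximum to close the continuous argument. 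This mismatch is precisely why the expected bound is exponentially small in $dD$ rather than of order $D^{-2}$, and it signals that one should track the full distribution of a random walk rather than a single pointwise quantity.

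So let $f$ be an eigenfunction on the finite connected graph, $-\Delta f=\lambda f$ with $\lambda>0$. Summing $d_x\Delta f(x)$ over $V$ gives $\sum_x d_x f(x)=0$ by edge cancellation, so $f$ is nonconstant with $\max_V f>0>\min_V f$; normalize so that $\max_V f=f(x_0)=1$, let $x_1$ attain $\min_V f<0$, and set $L:=\mathrm{dist}(x_0,x_1)\le D$. The central observation is that the averaging operator $Pf(x):=\frac{1}{d_x}\sum_{y\sim x}f(y)=f(x)+\Delta f(x)$ satisfies $Pf=(1-\lambda)f$, hence $P^{k}f=(1-\lambda)^{k}f$ for every $k$. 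Writing $P^{k}f(x)=\sum_{y}p_k(x,y)f(y)$, where $p_k(x,\cdot)$ is the $k$-step transition distribution of the simple random walk (so $p_k\ge0$ and $\sum_y p_k(x,y)=1$), converts this spectral identity into a statement about how mass spreads. Evaluating at $x_0$ with $k=L$, using $f\le1$ everywhere and splitting off the mass at $x_1$, I would get
\begin{equation*}
(1-\lambda)^{L}=\sum_{y}p_L(x_0,y)f(y)\le p_L(x_0,x_1)\,\min_V f+\bigl(1-p_L(x_0,x_1)\bigr)\le 1-p_L(x_0,x_1).
\end{equation*}
The only quantitative input is a lower bound on $p_L(x_0,x_1)$: following one fixed shortest path $x_0\sim p_1\sim\cdots\sim x_1$ of length exactly $L$, each prescribed step has probability $1/d_{p_i}\ge1/d$, so $p_L(x_0,x_1)\ge d^{-L}\ge d^{-D}$, and therefore $(1-\lambda)^{L}\le 1-d^{-L}$.

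Finally I would extract the eigenvalue bound. If $\lambda>1$ the claim is trivial, since $dD\bigl(\exp(dD+1)-1\bigr)\ge e^{2}-1>1$ forces its right-hand side below $1$; so assume $0<\lambda\le1$, where $1-\lambda\in[0,1)$ and Bernoulli's inequality gives $(1-\lambda)^{L}\ge1-L\lambda$. Combining this with the previous estimate gives $L\lambda\ge d^{-L}$, that is
\begin{equation*}
\lambda\ge\frac{1}{L\,d^{L}}\ge\frac{1}{D\,d^{D}},
\end{equation*}
which already implies the stated inequality because $d^{D-1}=e^{(D-1)\ln d}\le e^{dD}\le e^{dD+1}-1$ yields $D\,d^{D}\le dD\bigl(e^{dD+1}-1\bigr)$. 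I expect the main obstacle here to be conceptual rather than computational: the hard part is recognizing that the discrete gradient estimate stalls on the sign of the curvature and the absence of a first-order maximum condition, and that the right global object is the iterated averaging operator $P^{k}=(I+\Delta)^{k}$. Once that is in place, the entire quantitative content reduces to the elementary single-path lower bound $p_L(x_0,x_1)\ge d^{-L}$, and notably the curvature-dimension inequality $CD(2,\frac1d-1)$ is not needed for this route.
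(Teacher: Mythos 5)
Your proof is correct, and it takes a genuinely different route from the paper's. The paper proves this theorem the way Li and Yau did on manifolds: a maximum-principle argument applied to the quotient $|\nabla f(x)|^2/(\beta-f(x))^2$ (not to $|\nabla f|^2$ alone --- the Li--Yau trick is precisely to pass to this quotient, so your opening diagnosis, while a fair criticism of the naive Bochner route, does not describe what the paper actually does) yields the gradient estimate
$$\frac{|\nabla f(x)|^2}{(\beta-f(x))^2}\leq d\left(\frac{\lambda}{\beta-1}+1\right)^2, \qquad \beta>1,$$
which is then summed along a shortest path joining the maximum and minimum points of the eigenfunction and optimized in $\beta$ to produce the stated exponential bound. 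You instead exploit the identity $P^Lf=(1-\lambda)^Lf$ for the averaging operator $P=I+\Delta$, the single-trajectory bound $p_L(x_0,x_1)\ge d^{-L}$, and Bernoulli's inequality. Every step checks out: $\sum_x d_xf(x)=0$ forces $\max f>0>\min f$; nonzero eigenvalues of $-\Delta$ lie in $(0,2]$, so your case split at $\lambda=1$ covers everything; and the closing comparison $Dd^D\le dD\bigl(e^{dD+1}-1\bigr)$ holds since $d^{D-1}\le e^{dD}\le e^{dD+1}-1$. Indeed your intermediate bound $\lambda\ge 1/(Dd^D)$ is sharper than the theorem's. What the paper's method buys is the gradient estimate itself --- a pointwise Harnack-type inequality of independent interest (it is the theme of the surrounding section), valid on locally finite and possibly infinite graphs, and parallel to the manifold theory being surveyed; what yours buys is brevity, elementarity (no maximum principle), and a better constant. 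Your remark that $CD(2,\frac1d-1)$ is not needed is accurate but does not distinguish your proof from the paper's: the gradient-estimate argument does not invoke the curvature-dimension inequality either.
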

   \parskip=2\baselineskip

   Notice that there is a well-known estimate for the eigenvalue of Laplacian on graphs
    $$\lambda\geq\frac{1}{D\, Vol G}~,$$ where $VolG=\sum_{x\in{V}}d_x$.

The proof of the above Theorem is similar to the proof of Li and Yau on
    manifold by using the following gradient estimate
    $$\frac{|\nabla{f(x)}|^2}{(\beta-f(x))^2}\leq{d}(\frac{1}{\beta-1}\lambda+1)^2~,$$
    where $\beta>1$.

For the graph with Ricci curvature bounded below by a positive
number, There are the following eigenvalue estimate which is similar
to the Lichnerowicz theorem in Riemannian manifold. We can also show
the estimate is shape for $m=\infty$ and $m=2$.

\begin{theorem}[Fan Chung, Y. Lin, Y. Liu\cite{cll10}]
Suppose a finite graph $G$ satisfies the $CD(m,k)$ with $k>0$, then
the non-zero eigenvalue of $\Delta$ on $G$
$$\lambda\geq\frac{m}{m-1}k.$$
\end{theorem}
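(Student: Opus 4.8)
The plan is to imitate the classical Lichnerowicz argument: integrate the Bochner-type inequality $CD(m,k)$ against an eigenfunction and read off a lower bound for $\lambda$. The role played by integration over the manifold is taken here by summation over $V$ weighted by the degree measure, because $-\Delta$ is self-adjoint and non-negative with respect to the inner product $\langle f,g\rangle=\sum_{x\in V}d_x f(x)g(x)$ (note that the counting measure would \emph{not} make $\Delta$ symmetric; the weight $d_x$ is forced). Throughout, let $f\in V^R$ be an eigenfunction with $\Delta f=-\lambda f$ and $\lambda\neq 0$; since $G$ is connected, the kernel of $-\Delta$ consists only of constants, so $\lambda>0$ and $f$ is non-constant.

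First I would record the discrete integration-by-parts identities. Expanding $\sum_{x}d_x\,\Delta f(x)\,g(x)$ and symmetrizing the resulting double sum over oriented edges gives
$$\sum_{x\in V}d_x\,\Delta f(x)\,g(x)=-\sum_{x\in V}d_x\,\Gamma(f,g)(x),$$
where $\Gamma(f,g)(x)=\tfrac{1}{2d_x}\sum_{y\sim x}[f(y)-f(x)][g(y)-g(x)]$ is the polarized form of the expression in the Lemma above. Taking $g\equiv 1$ (so that $\Gamma(f,1)=0$) shows in particular that $\sum_{x\in V}d_x\,\Delta h(x)=0$ for every $h\in V^R$.

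The key step is to sum the definition of $\Gamma_2$ against the degree measure. Since $\sum_x d_x\,\Delta(\cdot)=0$ annihilates the term $\tfrac12\Delta\Gamma(f,f)$, only $-\Gamma(f,\Delta f)$ survives, and a second use of the identity above (with $g=\Delta f$) yields the discrete analogue of integrating Bochner's formula,
$$\sum_{x\in V}d_x\,\Gamma_2(f,f)(x)=\sum_{x\in V}d_x\,(\Delta f(x))^2.$$
Now I substitute the eigenfunction: $\Delta f=-\lambda f$ gives $\sum_x d_x(\Delta f)^2=\lambda^2\sum_x d_x f^2$, while the identity with $g=f$ gives $\sum_x d_x\,\Gamma(f,f)=\lambda\sum_x d_x f^2$. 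Summing the inequality $\Gamma_2(f,f)\geq\tfrac1m(\Delta f)^2+k\,\Gamma(f,f)$ against $d_x$ and inserting the displayed identity produces
$$\Big(1-\tfrac1m\Big)\lambda^2\sum_{x\in V}d_x f^2\ \geq\ k\,\lambda\sum_{x\in V}d_x f^2.$$
Because $f$ is non-constant we have $\sum_x d_x f^2>0$ and $\lambda>0$, so dividing by $\lambda\sum_x d_x f^2$ gives $\tfrac{m-1}{m}\lambda\geq k$, that is, $\lambda\geq\tfrac{m}{m-1}k$.

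The main obstacle is the bookkeeping behind the two summation identities, and above all verifying $\sum_x d_x\,\Gamma_2(f,f)=\sum_x d_x(\Delta f)^2$: one must symmetrize the edge sums carefully so that the discrete "boundary" contributions cancel, and one must use the degree-weighted inner product consistently, since it is exactly this weighting that makes $-\Delta$ self-adjoint and lets the $\tfrac12\Delta\Gamma(f,f)$ term drop out. Once these identities are established, the finiteness of $G$ guarantees convergence of all sums and the positivity of $\sum_x d_x f^2$, and the conclusion follows by elementary algebra.
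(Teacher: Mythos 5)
Your proposal is correct: the weighted integration-by-parts identity $\sum_x d_x\,\Delta f(x)\,g(x)=-\sum_x d_x\,\Gamma(f,g)(x)$, the resulting identity $\sum_x d_x\,\Gamma_2(f,f)=\sum_x d_x(\Delta f)^2$, and the substitution of the eigenfunction into the $CD(m,k)$ inequality all check out, and they yield $\bigl(1-\tfrac1m\bigr)\lambda\geq k$ exactly as you claim. Note that this survey states the theorem without proof (deferring to the cited Chung--Lin--Liu preprint), so there is no in-paper argument to compare against; your derivation is the standard discrete Lichnerowicz integration argument, which is precisely the approach of the cited work, and it is complete as written.
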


\section{Harnack inequality and eigenvalue estimate on graphs}

\subsection{Harnack inequality and eigenvalue estimate on graphs}

In this section, we will establish the Harnack inequality,
as a consequence, we get
eigenvalue estimate for graphs with  Ricci curvature bounded below by some constants.

The idea is to use the maximum principle similar to the Euclidien and manifold case.

We can prove the following Harnack type inequality. The idea of proof comes from \cite{cy95'}
and \cite{CHUNG-1997}.

\begin{theorem}[Fan Chung, Y. Lin,   S. T. Yau\cite{cly09}]
Suppose a finite graph $G$ satisfies the $CD(m,k)$, $f\in{V^R}$ is
an eigenfunction of Laplacian $\Delta$ with eigenvalue $\lambda$.
Then the following inequality holds for all $x\in{V}$.
$$|\nabla{f(x)}|^2\leq[(8-\frac{2}{m})\lambda-4k]\sup_{z\in{V}}f^2(x).$$
\end{theorem}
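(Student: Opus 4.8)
The plan is to run a discrete analogue of the Li--Yau/Bochner maximum-principle argument on the auxiliary function $F(x) = |\nabla f(x)|^2 + \alpha f^2(x)$, where $\alpha > 0$ is a constant to be fixed at the end. Since $G$ is finite, $F$ attains its maximum at some vertex $x_0$, and the discrete maximum principle gives $\Delta F(x_0) \le 0$, because every neighbour value satisfies $F(y) \le F(x_0)$, so $\Delta F(x_0) = \frac{1}{d_{x_0}}\sum_{y\sim x_0}[F(y) - F(x_0)] \le 0$. As $|\nabla f(x)|^2 \le F(x) \le F(x_0)$ for every $x$, it suffices to bound $F(x_0)$.

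The key computation is to expand $\Delta F = \Delta(|\nabla f|^2) + \alpha\,\Delta(f^2)$. For the second term I would use the identity already established in the proof of the Lemma, $\Delta(f^2) = 2f\Delta f + |\nabla f|^2$, together with the eigenvalue equation $\Delta f = -\lambda f$, to obtain $\Delta(f^2) = -2\lambda f^2 + |\nabla f|^2$. For the first term I would pass to $\Gamma$: since $|\nabla f|^2 = 2\Gamma(f,f)$ and $\Gamma(f,\Delta f) = -\lambda\,\Gamma(f,f)$, the definition of $\Gamma_2$ gives $\Gamma_2(f,f) = \frac12\Delta\Gamma(f,f) + \lambda\Gamma(f,f)$, hence $\Delta(|\nabla f|^2) = 4\Gamma_2(f,f) - 4\lambda\Gamma(f,f)$. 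Applying the hypothesis $CD(m,k)$, that is $\Gamma_2(f,f) \ge \frac1m\lambda^2 f^2 + k\Gamma(f,f)$, yields the pointwise lower bound $\Delta(|\nabla f|^2) \ge \frac4m\lambda^2 f^2 + 2(k-\lambda)|\nabla f|^2$.

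Evaluating at $x_0$ and combining with $\Delta F(x_0) \le 0$ then gives the single scalar inequality $[\,\frac4m\lambda^2 - 2\alpha\lambda\,]f^2(x_0) + [\,\alpha + 2(k-\lambda)\,]\,|\nabla f(x_0)|^2 \le 0$. Provided $\alpha$ makes the bracket multiplying $|\nabla f(x_0)|^2$ positive, I can solve for $|\nabla f(x_0)|^2$ in terms of $f^2(x_0)$, add back $\alpha f^2(x_0)$ to recover $F(x_0)$, and bound $f^2(x_0) \le \sup_z f^2(z)$. This produces $|\nabla f(x)|^2 \le B(\alpha)\sup_z f^2(z)$ with $B(\alpha) = \frac{\alpha^2 + 2\alpha k - \frac4m\lambda^2}{\alpha + 2k - 2\lambda}$.

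The final step, and the place I expect the real work to be, is to choose $\alpha$ so that $B(\alpha)$ collapses to the clean constant $(8 - \frac2m)\lambda - 4k$. Writing $u = \alpha + 2k - 2\lambda$ turns $B$ into $u + \frac{C}{u} + (4\lambda - 2k)$ with $C = 4(1-\frac1m)\lambda^2 - 4\lambda k$; the genuinely sharp choice $u = \sqrt{C}$ gives $2\sqrt{C} + 4\lambda - 2k$, which carries an awkward square root. I would instead take $\alpha = (4 - \frac1m)\lambda - 3k$, equivalently $u = (2 - \frac1m)\lambda - k$, and use the algebraic identity $[(2-\frac1m)\lambda - k]^2 - C = (\frac{\lambda}{m} + k)^2 \ge 0$ to conclude $u + \frac{C}{u} \le 2u = (4 - \frac2m)\lambda - 2k$, hence $B(\alpha) \le (8 - \frac2m)\lambda - 4k$. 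The obstacles are thus more bookkeeping than conceptual: one must track signs so the discrete maximum principle is applied in the right direction, and verify the positivity conditions ($\alpha \ge 0$ and $u > 0$) under which the argument and the stated bound are meaningful.
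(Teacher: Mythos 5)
Your proposal is correct and follows essentially the same route as the paper's (sketched) proof: the paper likewise converts $CD(m,k)$ into a pointwise lower bound for $\Delta(|\nabla f|^2)$ — via its ``key lemma,'' which is exactly $CD(m,k)$ rewritten using the explicit formula for $\Gamma_2$, so your abstract derivation through $\Delta(|\nabla f|^2)=4\Gamma_2(f,f)-4\lambda\Gamma(f,f)$ is equivalent — and then applies the maximum principle to the Chung--Yau auxiliary function $|\nabla f|^2+\alpha\lambda f^2$, which is your $F$ up to reparametrizing $\alpha$. The only remaining work is the routine check of your positivity conditions: for $\lambda>0$ they follow from the Lichnerowicz-type bound $k\leq\frac{m-1}{m}\lambda$ stated earlier in the paper (or from one integration by parts of the $CD(m,k)$ inequality), while for $\lambda=0$ the claimed inequality is trivial when $k\leq 0$ and actually fails when $k>0$ (constant eigenfunction), an edge case the survey's statement glosses over.
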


As a consequence, we can get an eigenvalue estimate for graphs.

\begin{theorem}[Fan Chung, Y. Lin,   S. T. Yau\cite{cly09}]
Suppose a finite connected graph $G$ satisfies $CD(m,k)$ and  $\lambda$
is a non-zero eigenvalue of Laplace operator $\Delta$ on $G$. Then
$$\lambda\geq\frac{1+4kdD^2}{d\cdot (8-\frac{2}{m})\cdot{D}^2}$$
where $d$ is the maximum degree and $D$ denotes the diameter of $G$.
\end{theorem}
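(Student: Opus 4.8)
The plan is to obtain this eigenvalue estimate as a direct consequence of the Harnack inequality in the preceding theorem, via a discrete version of the Li--Yau path argument. Let $f$ be an eigenfunction for the non-zero eigenvalue $\lambda$, so $-\Delta f=\lambda f$. First I would record the basic orthogonality: summing the eigenvalue equation against the degrees gives
$$\lambda\sum_{x\in V}d_x f(x)=-\sum_{x\in V}d_x\Delta f(x)=-\sum_{x\in V}\sum_{y\sim x}[f(y)-f(x)]=0,$$
since the last sum is antisymmetric over the edges. As $\lambda\neq 0$, this forces $\sum_x d_x f(x)=0$, so $f$ must change sign. I can then normalize $f$ so that $\sup_{z}f^2(z)=1$, and, after replacing $f$ by $-f$ if necessary, assume the extreme value $+1$ is attained at some vertex $x_{\max}$; by the sign change there is a vertex $x_{\min}$ with $f(x_{\min})\le 0$, whence $f(x_{\max})-f(x_{\min})\ge 1$.

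Next I would pass from this global oscillation to a telescoping sum along a shortest path. Choose a shortest path $x_{\max}=v_0\sim v_1\sim\cdots\sim v_\ell=x_{\min}$ joining the two vertices; by definition of the diameter, $\ell\le D$. Telescoping,
$$1\le f(x_{\max})-f(x_{\min})=\sum_{i=0}^{\ell-1}\bigl[f(v_i)-f(v_{i+1})\bigr]\le\sum_{i=0}^{\ell-1}\bigl|f(v_{i+1})-f(v_i)\bigr|.$$
The point is to control each single edge increment by the gradient. Since $v_{i+1}$ is one neighbor of $v_i$, the single term $[f(v_{i+1})-f(v_i)]^2$ is dominated by the full sum defining $|\nabla f(v_i)|^2$, giving
$$\bigl[f(v_{i+1})-f(v_i)\bigr]^2\le\sum_{y\sim v_i}[f(y)-f(v_i)]^2=d_{v_i}\,|\nabla f(v_i)|^2\le d\,|\nabla f(v_i)|^2.$$

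Now I would invoke the Harnack inequality. With the normalization $\sup_z f^2(z)=1$ it yields $|\nabla f(v_i)|^2\le(8-\tfrac{2}{m})\lambda-4k$ at every vertex, so each edge increment satisfies $|f(v_{i+1})-f(v_i)|\le\sqrt{d}\,\sqrt{(8-\tfrac{2}{m})\lambda-4k}$. (The right-hand side is automatically non-negative for a non-constant eigenfunction, since otherwise the Harnack inequality would force $|\nabla f|\equiv 0$.) Summing over the at most $D$ edges and using the telescoped lower bound gives
$$1\le D\sqrt{d}\,\sqrt{(8-\tfrac{2}{m})\lambda-4k}.$$
Squaring and solving for $\lambda$ produces $\lambda\ge\dfrac{1+4kdD^2}{d\,(8-\tfrac{2}{m})\,D^2}$, which is exactly the claimed bound.

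The substantive input is the Harnack inequality, which is already established; granting it, the remaining work is elementary. I therefore expect the only points requiring real care to be bookkeeping: verifying that a non-zero eigenvalue forces the sign change, so that the oscillation of $f$ across the path is at least $1$ after normalization, and correctly passing from the per-vertex gradient bound to the per-edge increment via the factor $d_{v_i}\le d$. No single step is a genuine obstacle once the Harnack estimate is in hand.
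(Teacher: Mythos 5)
Your proof is correct and follows essentially the same route the paper intends: deducing the eigenvalue bound from the Harnack inequality, using the observation $\sum_{x\in V}d_xf(x)=0$ (which the paper explicitly records) to force a sign change, and then telescoping along a shortest path of length at most $D$ with each edge increment controlled by $d\,|\nabla f|^2$ and the Harnack bound. The only cosmetic difference is that you bound each increment uniformly and multiply by $D$ rather than applying Cauchy--Schwarz to the telescoped sum; both yield the identical constant $\lambda\geq\frac{1+4kdD^2}{d\,(8-\frac{2}{m})\,D^2}$.
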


\begin{remark}

This means we still have an eigenvalue lower bound for graphs with
Ricci curvature bounded below by some negative number, that is when
$$ k >-\frac{1}{4dD^2}.$$

\end{remark}

 For graph with non-negative Ricci curvature.
We have:
\begin{corollary}
Suppose a finitely connected graph $G$ satisfies $CD(m,0)$, then the
non-zero eigenvalue of $\Delta$ on $G$
$$\lambda\geq\frac{1}{(8-\frac{1}{m})d\,D^2}~.$$
\end{corollary}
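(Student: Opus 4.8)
The plan is to obtain this as a direct specialization of the preceding eigenvalue estimate, which already handles the case of an arbitrary lower curvature bound $k$. The hypothesis $CD(m,0)$ is by definition exactly the statement that $G$ satisfies $CD(m,k)$ with $k=0$, so the entire content of the corollary should fall out by setting $k=0$ in the Theorem of Chung--Lin--Yau quoted just above.

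Concretely, I would first invoke that Theorem with the given dimension parameter $m$ and with $k=0$. Its conclusion
$$\lambda\geq\frac{1+4kdD^2}{d\,(8-\frac{2}{m})\,D^2}$$
collapses upon substituting $k=0$ to
$$\lambda\geq\frac{1}{d\,(8-\frac{2}{m})\,D^2}~.$$
This is already a clean lower bound in terms of $m$, the maximum degree $d$, and the diameter $D$.

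The only remaining step is to reconcile this with the asserted bound involving $8-\frac{1}{m}$. Since $m>1$ we have $\frac{2}{m}>\frac{1}{m}$, hence $8-\frac{2}{m}<8-\frac{1}{m}$, and therefore
$$\frac{1}{d\,(8-\frac{2}{m})\,D^2}\geq\frac{1}{d\,(8-\frac{1}{m})\,D^2}~.$$
Chaining the two inequalities yields the claimed estimate. I do not anticipate a genuine obstacle: the result is a corollary in the literal sense, and all the analytic machinery — the maximum principle argument and the Harnack inequality supplying the previous Theorem — has already been established. The one point that warrants a moment's care is the direction of this final comparison, namely confirming that enlarging the denominator factor from $8-\frac{2}{m}$ to $8-\frac{1}{m}$ genuinely \emph{weakens} the bound, so that the stated inequality is implied by rather than stronger than what the Theorem delivers.
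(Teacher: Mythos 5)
Your proof is correct and is essentially the paper's own route: the corollary is just the $k=0$ case of the preceding Chung--Lin--Yau eigenvalue theorem (whose proof, as the paper sketches, runs the Harnack inequality together with the observation that $\sum_{x\in V} d_x f(x)=0$ for an eigenfunction). Your reconciliation of the constant is also handled correctly: setting $k=0$ actually yields the stronger bound with denominator factor $8-\tfrac{2}{m}$, and since $8-\tfrac{2}{m}\leq 8-\tfrac{1}{m}$ for $m\in(1,\infty]$, the stated bound with $8-\tfrac{1}{m}$ --- almost certainly a misprint in the paper for $8-\tfrac{2}{m}$ --- follows a fortiori.
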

\begin{remark}
Fan Chung and Yau proved a similar result for so-called Ricci-flat
graphs\cite{cy96}. The Ricci-flat graphs satisfy $CD(\infty,0)$.
Chung and Yau gave an example that showed their eigenvalue estimate
$\lambda\geq\frac{1}{8d\,D^2}$ is a sharp one for Ricci-flat graph. If
$m<\infty$, then we have a better estimate. For example, if $G$ is a
triangle graph with three vertices, then $m=2$.
\end{remark}

This is the key lemma to prove the
functional inequalities in this section.

\begin{lemma}\label{lem1}
Suppose $G$ is a   finite connected graph satisfying $CD(m,k)$, then
for $x\in{V}$, we have
$$(\frac{4}{m}-2)(\Delta{f})^2+(2+2k)|\nabla{f}|^2\leq\frac{1}{d_x}
\sum_{xy\in E}\frac{1}{d_y}\sum_{yz\in E}[f(x)-2f(y)+f(z)]^2.$$
\end{lemma}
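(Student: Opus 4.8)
The plan is to read off the statement directly from the explicit formula for $\Gamma_2(f,f)(x)$ recorded in the Remark, combined with the identity $\Gamma(f,f)=\frac12|\nabla f|^2$ from the preceding Lemma and the definition of the $CD(m,k)$ inequality. The key observation is that the three terms in the Remark's formula for $\Gamma_2$ are exactly the three quantities appearing in the Lemma. Writing
$$S=\frac{1}{d_x}\sum_{y\sim x}\frac{1}{d_y}\sum_{z\sim y}[f(x)-2f(y)+f(z)]^2$$
for the iterated sum on the right-hand side, I would note that the middle term of the Remark's expression is $-\frac12|\nabla f|^2$ (since $|\nabla f|^2=\frac{1}{d_x}\sum_{y\sim x}[f(y)-f(x)]^2$) and the last term is $+\frac12(\Delta f)^2$ (since $\Delta f(x)=\frac{1}{d_x}\sum_{y\sim x}[f(y)-f(x)]$), so that
$$\Gamma_2(f,f)(x)=\frac14 S-\frac12|\nabla f|^2+\frac12(\Delta f)^2.$$

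Next I would invoke the hypothesis that $G$ satisfies $CD(m,k)$. Using $\Gamma(f,f)=\frac12|\nabla f|^2$, the curvature–dimension inequality reads
$$\Gamma_2(f,f)(x)\geq\frac{1}{m}(\Delta f)^2+\frac{k}{2}|\nabla f|^2.$$
Substituting the expression for $\Gamma_2$ from the previous step into the left-hand side gives
$$\frac14 S-\frac12|\nabla f|^2+\frac12(\Delta f)^2\geq\frac{1}{m}(\Delta f)^2+\frac{k}{2}|\nabla f|^2.$$

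The final step is pure rearrangement: moving the $(\Delta f)^2$ and $|\nabla f|^2$ terms to the right isolates
$$\frac14 S\geq\left(\frac{1}{m}-\frac12\right)(\Delta f)^2+\left(\frac{k}{2}+\frac12\right)|\nabla f|^2,$$
and multiplying through by $4$ yields exactly
$$\left(\frac{4}{m}-2\right)(\Delta f)^2+(2+2k)|\nabla f|^2\leq S,$$
which is the claimed inequality once $S$ is written back as the double sum over edges. There is no genuine obstacle here: the entire content is encoded in the Remark's closed-form expression for $\Gamma_2$, and the proof amounts to matching terms and tracking coefficients. The only point demanding care is the bookkeeping of the $\frac14$, $\frac12$, and $\frac1m$ factors together with the substitution $\Gamma(f,f)=\frac12|\nabla f|^2$, so that the signs and constants $\frac{4}{m}-2$ and $2+2k$ come out correctly; all the real analytic work (the expansion of $\Delta\Gamma(f,f)$ and $\Gamma(f,\Delta f)$ into iterated differences) has already been done to produce the formula quoted in the Remark.
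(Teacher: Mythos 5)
Your proof is correct and is essentially the argument the paper intends: the survey states the lemma without proof, but it has already supplied exactly the ingredients you use, namely the closed-form expansion $\Gamma_2(f,f)(x)=\tfrac14 S-\tfrac12|\nabla f|^2+\tfrac12(\Delta f)^2$ and the identity $\Gamma(f,f)=\tfrac12|\nabla f|^2$, so the lemma is just the $CD(m,k)$ inequality rearranged. Your coefficient bookkeeping checks out ($\tfrac14 S\geq(\tfrac1m-\tfrac12)(\Delta f)^2+(\tfrac{k}{2}+\tfrac12)|\nabla f|^2$, then multiply by $4$), the only minor slip being attribution: the $\Gamma_2$ expansion appears just after the Remark, not in it.
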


 By using the above lemma, from the $CD(m,0)$ condition, we can obtain
$$\Delta(|\nabla{f(x)}|^2)\geq{2\lambda|\nabla{f(x)}|^2}-\frac{4}{m}{\lambda}^2f(x)~.$$
Meanwhile, we have
$$\Delta{f^2(x)}=2\lambda{f^2(x)}-|\nabla{f(x)}|^2~.$$
We can then use a maximum principle for function $|\nabla{f}|^2(x)+\alpha\lambda{f}^2(x)$
as Chung and Yau did.

For the proof of the corollary, we shall note that for the
eigenfunction
$f(x)$, we have ~~$\sum_{x\in{V}}d_xf(x)=0\,.$\\

\subsection{Higher order eigenvalues}

First we consider distance functions on vertices set $V$. We assume that we are given some distance function on V and denote it by $\rho_\xi(x)$.
Denote by $B_\xi(x)$ the ball defined by $\rho$, that is
\begin{displaymath}
B_\xi(r) = \{x:\rho_{\xi}(x) < r\}
\end{displaymath}
Let us assume that $\rho_{\xi}$ has the following property:
\begin{displaymath}
\mid \nabla_{xy}\rho_{\xi}\mid \leq 1
\end{displaymath}
for any edge $xy \in E$ and for any vertex $\xi \in V$, where
$$\nabla_{xy}f=f(y)-f(x)$$
is the value of the gradient of $f$ assigns on each ordered pair $x,y\in V$.

Next, we will need the following constant characterizing a structure of edges at the boundary of the ball $B_{\xi}(r)$. Given points $\xi,x\in V$, consider the following sum of $\sigma_{xy}$ over all points $y$ adjacent to $x$ and satisfying $\rho_{\xi}(y)<\rho_{\xi}(x)$:
\begin{displaymath}
\mu_x^{(\xi)} = \sum_{\{y:y\sim x \: and \: \rho_{\xi}(y)<\rho_{\xi}(x)\}} \sigma_{xy}
\end{displaymath}
Here $\sigma_{xy}$ is the weight of edge $xy$. Clearly $\mu_x^{(\xi)}\leq \mu_x$, where $\mu_{x}$ is defined to be $\sum_{y:y\sim x}\sigma_{xy}$. We regard
$\mu_{x}$ as a measure on vertices, namely for any subset $\Omega$ of vertices, $\mu(\Omega)=\sum_{x\in \Omega}\mu_x$.

We define the \textsl{spring ratio} $\nu_r$, for any $r > 0$, as follows
\begin{displaymath}
\nu_r = \inf_{\xi \in M,x\in B_{\xi}(r)}\frac{\mu_x}{\mu_x^{(\xi)}}
\end{displaymath}

Together with the function $\rho_{\xi}(x)$, we consider another function $q_\xi(x)$ - an analogue of the square distance. We postulate the following properties of q, for some positive constants $\delta,\iota$ and $R_0$:

\begin{enumerate}
\item \label{con1} $q_\xi(x) \geq 0$, and $q_\xi(x) = 0$ if and only if $x = \xi$.
\item \label{con2} For any vertex $\xi$ and for arbitrary adjacent vertices $x,y \in B_\xi(R_0)$,
    \begin{displaymath}
    \nabla_{xy}q_{\xi} \leq \rho_{\xi}(x) + \iota;
    \end{displaymath}
    clearly, we can always assume that
    \begin{displaymath}
    \iota \geq 1.
    \end{displaymath}
\item \label{con3}For any vertex $\xi$ and all $x\in B_\xi(R_0)$,
\begin{displaymath}
\Delta q_{\xi}(x)\geq \delta.
\end{displaymath}
\end{enumerate}

\begin{example}
Let $(\Gamma,\sigma)$ be the rectangular lattice graph defined on $\mathbb{Z}^n$. Let us consider
\begin{displaymath}
\rho_t(x) = \displaystyle{\max_{1\leq i \leq n}}\mid x_i - \xi_i\mid
\end{displaymath}
and
\begin{displaymath}
q_{\xi}(x)=\frac{1}{2}\sum_{i = 1}^n (x_i - \xi_i)^2
\end{displaymath}
\end{example}
In other word, $\rho_\xi(x)$ is the $l^{\infty}$-distance whereas $q_\xi(x)$ is determined by the $l^2$-distance.
It can be verified that condition \ref{con1} - condition \ref{con3} are true under these settings.

\begin{definition}
Given positive numbers $\delta,\iota$ and $R_0$, we say that a weighted graph $(\Gamma,\sigma)$ has property $P(\delta,\iota,R_0)$ if there exist a distance function $\rho$ satisfying $\nabla_{xy}\rho_\xi \leq 1$ and a function $q$ satisfying the hypotheses \ref{con1} - \ref{con3} such that
\begin{displaymath}
n:= \delta \nu_{R_0+1} \geq 1
\end{displaymath}
where $\nu_{R_0+1}$ and $\delta$ which are the spring ratio and the parameter brought by the condition \ref{con3} above.
\end{definition}

\begin{theorem}[Fan Chung, A. Grigoryan, S. T. Yau\cite{cgy00}]
Assume that the weighted graph $(\Gamma,\sigma)$ has property $P(n,\iota,R_0)$ and that
\begin{displaymath}
\omega' := \inf_{x\sim y} \sigma_{xy} > 0
\end{displaymath}
and denote
\begin{displaymath}
\omega:= \inf_{x\in V} \mu_x
\end{displaymath}
Then for any finite set $\Omega \subset V$ the Dirichlet eigenvalue $\lambda_k(\Omega)$ satisfies
\begin{displaymath}
\lambda_k(\Omega) \geq a(\frac{k}{\mu(\Omega)})^{\frac{2}{n}}
\end{displaymath}
provided
\begin{displaymath}
\mid\Omega\mid \geq k \geq \kappa \frac{\mu(\Omega)}{\omega R_0^n}
\end{displaymath}
where $\kappa = c_1(n)\iota^n > 0$ and $a = c_2(n)\iota^{-2}\nu_{R_0+1}^{-2}\omega^{'2}\omega^{2/n-2} > 0$ and $\mid \Omega\mid$ is the number of vertices.
\end{theorem}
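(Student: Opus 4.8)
\section*{Proof proposal}

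The plan is to extract from the structural hypotheses a single functional inequality of Faber--Krahn / Nash type, valid up to the scale $R_0$, and then to promote it to a bound on every $\lambda_k(\Omega)$ by the elementary trace identity for the Dirichlet heat semigroup. Write the Dirichlet form as $\mathcal{E}(f,f)=\frac{1}{2}\sum_{x\sim y}\sigma_{xy}(f(x)-f(y))^2$, so that $\lambda_1(U)=\inf\{\mathcal{E}(f,f)/\sum_x f(x)^2\mu_x:\ \operatorname{supp}f\subset U\}$ and $\mathcal{E}(f,f)\geq\tfrac{\omega'}{2}\sum_{x\sim y}(f(x)-f(y))^2$. The first and decisive goal is the Nash inequality $\|f\|_2^{2+4/n}\leq C(n)\,a^{-1}\,\mathcal{E}(f,f)\,\|f\|_1^{4/n}$ for functions supported in balls of radius $\leq R_0$, with $a$ the constant of the statement and $n=\delta\nu_{R_0+1}$ playing the role of a dimension; equivalently, the Faber--Krahn inequality $\lambda_1(U)\geq a\,\mu(U)^{-2/n}$ for $\mu(U)\lesssim\omega R_0^n$.

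Granting this, the passage to $\lambda_k$ is standard and I would carry it out as follows. By the classical equivalence between Nash inequalities and on-diagonal heat-kernel bounds, we get $\sup_x p_t(x,x)\leq C(n)(at)^{-n/2}$, valid here only for $t\lesssim R_0^2$ since the inequality itself holds only up to scale $R_0$. By domain monotonicity the Dirichlet heat kernel on $\Omega$ obeys the same bound, so its trace satisfies $\sum_j e^{-\lambda_j(\Omega)t}=\sum_{x\in\Omega}p_t^{\Omega}(x,x)\mu_x\leq C(n)(at)^{-n/2}\mu(\Omega)$. Since the eigenvalues are nondecreasing, $k\,e^{-\lambda_k(\Omega)t}\leq\sum_{j\leq k}e^{-\lambda_j(\Omega)t}$, and choosing $t\sim a^{-1}(\mu(\Omega)/k)^{2/n}$ yields $\lambda_k(\Omega)\geq c(n)\,a\,(k/\mu(\Omega))^{2/n}$. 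The admissibility $t\lesssim R_0^2$ turns, under this choice of $t$, into precisely the lower constraint $k\geq\kappa\,\mu(\Omega)/(\omega R_0^n)$ with $\kappa=c_1(n)\iota^n$; matching the remaining constants is bookkeeping.

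The heart of the matter is the functional inequality, and this is where $q_\xi$, $\rho_\xi$ and the spring ratio all enter. I would first establish a volume lower bound $\mu(B_\xi(r))\gtrsim\omega\,r^n$ for $r\leq R_0$. Summing condition~\ref{con3} over a ball and applying the discrete Green formula, the interior edges cancel and the boundary edges are controlled by condition~\ref{con2}, giving the one-scale isoperimetric estimate $\delta\,\mu(B_\xi(r))\leq(r+\iota)\,|\partial_e B_\xi(r)|_\sigma$. The spring ratio converts the edge boundary into annulus volume: an outer-boundary vertex $y$ is joined to $B_\xi(r)$ only through its inward edges, of total weight $\mu_y^{(\xi)}\leq\mu_y/\nu_{R_0+1}$, whence $|\partial_e B_\xi(r)|_\sigma\leq\nu_{R_0+1}^{-1}\bigl(\mu(B_\xi(r+1))-\mu(B_\xi(r))\bigr)$. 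Combining the two gives the growth recursion $\mu(B_\xi(r+1))\geq(1+\tfrac{n}{r+\iota})\mu(B_\xi(r))$, which iterates from $\mu(B_\xi(1))\geq\omega$ to the power law $\mu(B_\xi(r))\gtrsim\omega\,r^n$. A second use of Green's formula against $f^2$, namely $\delta\|f\|_2^2\leq\tfrac12\sum_{x\sim y}\sigma_{xy}|\nabla_{xy}q_\xi|\,|\nabla_{xy}f^2|$, together with condition~\ref{con2} and Cauchy--Schwarz, produces a Hardy-type inequality bounding $\|f\|_2$ by $\mathcal{E}(f,f)^{1/2}$ at each scale $r$; interpolating this against the volume lower bound and optimizing over $r\leq R_0$ delivers the Nash inequality with constant $a=c_2(n)\iota^{-2}\nu_{R_0+1}^{-2}\omega'^2\omega^{2/n-2}$.

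I expect the genuine difficulties to be three. First, there is no chain rule on a graph, so the factor $\nabla_{xy}f^2=(f(x)-f(y))(f(x)+f(y))$ must be treated as an honest discrete difference, and the one-sided bound of condition~\ref{con2} must be symmetrized to $|\nabla_{xy}q_\xi|\leq\max(\rho_\xi(x),\rho_\xi(y))+\iota$ before Cauchy--Schwarz is applied. Second, squeezing the full exponent $n=\delta\nu_{R_0+1}$ out of the volume recursion, rather than merely $\delta$, hinges on the sharp conversion $|\partial_e B_\xi(r)|_\sigma\leq\nu_{R_0+1}^{-1}\mu(\text{annulus})$ and on verifying that the inward edges of boundary vertices indeed land in $B_\xi(r)$; this is the step most sensitive to the combinatorics and to the cutoff at $r=R_0$. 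Third, and most laboriously, one must propagate the explicit dependence on $\iota,\nu_{R_0+1},\omega,\omega'$ coherently through the volume bound, the Hardy inequality, the interpolation, and finally the trace optimization, so that the constants emerge in exactly the advertised form $\kappa=c_1(n)\iota^n$ and $a=c_2(n)\iota^{-2}\nu_{R_0+1}^{-2}\omega'^2\omega^{2/n-2}$.
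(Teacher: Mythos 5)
You should first be aware that this survey does not prove the theorem at all: it is quoted verbatim from the cited Chung--Grigor'yan--Yau paper, so your proposal can only be judged against that argument. Your overall architecture is the right one and does match it in outline: property $P$ yields the volume growth $\mu(B_\xi(r))\gtrsim \omega(r/\iota)^n$ exactly by your Green's-formula-plus-spring-ratio recursion $\mu(B_\xi(r+1))\geq(1+\tfrac{n}{r+\iota})\mu(B_\xi(r))$, the $q_\xi$ test-function argument gives a local eigenvalue bound, and the passage from a Faber--Krahn/Nash inequality at scales up to $R_0$ to an on-diagonal heat-kernel bound for small times, then to $\lambda_k$ via the trace $\sum_j e^{-\lambda_j(\Omega)t}=\sum_{x\in\Omega}p_t^{\Omega}(x,x)\mu_x$ and optimization in $t$, is sound; your bookkeeping that the admissible time range turns into the constraint $k\geq \kappa\,\mu(\Omega)/(\omega R_0^n)$ with $\kappa=c_1(n)\iota^n$ is also correct.

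The genuine gap is in your first step, and it is not bookkeeping. You assert that the Nash inequality ``for functions supported in balls of radius $\leq R_0$'' is \emph{equivalently} the Faber--Krahn inequality $\lambda_1(U)\geq a\,\mu(U)^{-2/n}$ for $\mu(U)\lesssim\omega R_0^n$. These are incomparable hypotheses. A set of measure $\lesssim\omega R_0^n$ need not lie in any ball of radius $R_0$ --- think of a union of pairwise distant vertices --- and for such sets your Green's-formula argument with $q_\xi$ says nothing, because the hypotheses $\nabla_{xy}q_\xi\leq\rho_\xi(x)+\iota$ and $\Delta q_\xi\geq\delta$ hold only inside $B_\xi(R_0)$. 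Meanwhile the heat-kernel step genuinely consumes the measure-restricted version: in the Nash/Grigor'yan iteration the functions that arise (level sets and slices of $p_t^{\Omega}(x,\cdot)$) have controlled measure but are supported in all of $\Omega$, not in any ball, and domain monotonicity does not repair this. Nor does the obvious fix work: an IMS/partition-of-unity localization of the first eigenfunction of an arbitrary small set into balls of radius $r$ produces cutoff errors of order $1/r^2$, the same order as the local bound $\delta^2/(r+\iota)^2$ it is supposed to exploit, so nothing survives unless the smallness of $\mu(U)$ relative to the ball volume is used in an essential, quantitative way. Converting the ball-localized estimate into a Faber--Krahn (or isoperimetric) inequality valid for \emph{every} finite set of small measure --- which is what the theorem's arbitrary $\Omega$ forces --- is the technical heart of the cited paper, and it is exactly the step your outline elides. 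A smaller point: on a graph you should also commit to the continuous-time semigroup $e^{-t\Delta}$; with the discrete-time kernel the spectrum of the normalized Laplacian in $[0,2]$ creates parity issues that your sketch ignores.
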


As a corollary we have
\begin{theorem}[Fan Chung, A. Grigoryan, S. T. Yau\cite{cgy00}]
For the lattice graph on $\mathbb{Z}^n$, for any finite subset $\Omega$ of vertices, we have
\begin{displaymath}
\lambda_k(\Omega) \geq a (\frac{k}{\mu(\Omega)})^{\frac{2}{n}}
\end{displaymath}
where $k$ is any integer between $1$ and $\mid \Omega \mid$ and $a = a(n) > 0$.
\end{theorem}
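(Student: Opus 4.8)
The plan is to obtain the corollary by specializing the preceding theorem of Chung, Grigoryan and Yau to the integer lattice, using the distance and square-distance functions already singled out in the Example. Concretely, I would equip $\mathbb{Z}^n$ with unit weights $\sigma_{xy}=1$ on every edge, so that $\omega'=\inf_{x\sim y}\sigma_{xy}=1$, every vertex has $\mu_x=2n$, and hence $\omega=\inf_x\mu_x=2n$. I then take $\rho_\xi(x)=\max_{1\le i\le n}|x_i-\xi_i|$ (the $\ell^\infty$ distance), which clearly satisfies $|\nabla_{xy}\rho_\xi|\le 1$ since adjacent vertices differ in a single coordinate by $1$, together with $q_\xi(x)=\tfrac12\sum_{i=1}^n(x_i-\xi_i)^2$.

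First I would verify that the lattice has property $P$. Condition (1) is immediate. For condition (2), if $y=x\pm e_i$ then a one-line computation gives $\nabla_{xy}q_\xi=\pm(x_i-\xi_i)+\tfrac12\le |x_i-\xi_i|+\tfrac12\le\rho_\xi(x)+\tfrac12$, so the hypothesis holds with $\iota=1$. For condition (3), summing $q_\xi(x\pm e_i)-q_\xi(x)$ over the two neighbours in each coordinate direction contributes exactly $1$, so $\Delta q_\xi(x)=\tfrac{1}{2n}\cdot n=\tfrac12$ at every vertex, giving $\delta=\tfrac12$.

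The key structural step is the computation of the spring ratio. I would show that for $x\ne\xi$ at most one neighbour of $x$ is strictly closer to $\xi$ in the $\ell^\infty$ distance: writing $m=\rho_\xi(x)$ and $S=\{i:|x_i-\xi_i|=m\}$, a single-coordinate move decreases $\rho_\xi$ only when $|S|=1$ and the move is toward $\xi$ in that coordinate, and it leaves $\rho_\xi$ unchanged or increases it otherwise. Hence $\mu_x^{(\xi)}\le 1$, so $\mu_x/\mu_x^{(\xi)}\ge 2n$ and $\nu_r=2n$ for all $r$. Consequently the derived dimension is $\delta\,\nu_{R_0+1}=\tfrac12\cdot 2n=n$, which matches the dimension of the lattice, and the lattice has property $P(n,1,R_0)$ for every $R_0>0$.

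The main obstacle, and the point where I would spend the most care, is removing the admissibility restriction $k\ge \kappa\,\mu(\Omega)/(\omega R_0^n)$ so that the bound holds for all $1\le k\le|\Omega|$. The decisive observation is that here $\nu_{R_0+1}=2n$, $\iota=1$, $\omega'=1$ and $\omega=2n$ are all independent of $R_0$, so the constant $a=c_2(n)\iota^{-2}\nu_{R_0+1}^{-2}\omega^{'2}\omega^{2/n-2}$ depends only on $n$; thus I set $a=a(n)$. Because $\Omega$ is finite, $\mu(\Omega)<\infty$, and because the lattice has property $P(n,1,R_0)$ for arbitrarily large $R_0$, I can choose $R_0$ large enough that $\kappa\,\mu(\Omega)/(\omega R_0^n)=c_1(n)\mu(\Omega)/(2n R_0^n)\le 1$. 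For such $R_0$ the constraint $|\Omega|\ge k\ge\kappa\,\mu(\Omega)/(\omega R_0^n)$ reduces to $1\le k\le|\Omega|$, and the previous theorem yields $\lambda_k(\Omega)\ge a\,(k/\mu(\Omega))^{2/n}$ for every admissible $k$, as claimed.
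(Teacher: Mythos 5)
Your proposal is correct and follows exactly the route the paper intends: it specializes the preceding theorem to $\mathbb{Z}^n$ using the $\ell^\infty$ distance $\rho_\xi$ and the function $q_\xi(x)=\frac12\sum_i(x_i-\xi_i)^2$ from the Example, verifies property $P(n,1,R_0)$ (with $\delta=\frac12$, $\nu_{R_0+1}=2n$, hence derived dimension $n$), and removes the admissibility restriction on $k$ by letting $R_0\to\infty$, noting that the constant $a$ is independent of $R_0$. This matches the paper's (unwritten but clearly intended) derivation of the corollary.
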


\section{Modified Ollivier's Ricci curvature on graphs and Ricci-flat graphs}

\subsection{Modified Ollivier's Ricci curvature}

Recently, Lin-Lu-Yau\cite{LLY} have modified the definition of Ollivier (for
Ricci curvature of Markov chains on metric spaces\cite{ollivier}) to define Ricci
curvature of a graph in the following way.A probability distribution is a function $m: V \longrightarrow [0,1]$ so that $\sum_{x \in V} m(x) =1\,$.For any two probability distributions, $m_1$ and $m_2\,$, the transportation distance is

$$W(m_1,m_2)= \sup_f \sum_{x\in V} f(x) \left[m_1(x) - m_2(x)\right]$$

where $f$ is any Lipschitz function with constant one:

$$|f(x)-f(y)|\leq  d(x,y)$$

Given $0\leq \alpha \leq 1\,$, we define the probability distributions

$$m^{\alpha}_x(v)=\left\{
    \begin{array}{cll}
      \alpha & \mbox{ if } v = x~;\\
      \frac{1-\alpha}{d_x} & \mbox{ if } v\sim x~;\\
      0 & \mbox{ otherwise~.}
    \end{array}\right.$$

For any $x,y\in V\, $ define $\alpha$-Ricci curvature
$$
\kappa_\alpha(x,y)=1-\frac{W(m^\alpha_x, m^\alpha_y)}{d(x,y)}~.
$$

{\textbf{Lemma 1}.~} $\kappa_\alpha$ concaves upward for $0 \leq \alpha \leq 1\,$.

{\textbf{Lemma 2}.~} $\kappa_\alpha(x,y)\leq (1-\alpha)\dfrac{2}{d(x,y)}.$

When $\alpha=0$, $\kappa_0(x,y)$ is Ricci curvature defined by Ollivier. We have the following
theorem for $\kappa_0(x,y)$.

    \begin{theorem}[Y. Lin,  S. T. Yau\cite{ly09}]
    The Ricci curvature of Ollivier
$\kappa_0(x,y)\geq \frac{2}{d_{x}}+\frac{2}{d_{y}}-2$ if $d_{x}>1$ and $d_{y}>1$;
$\kappa_0(x,y)=0$ if $d_{x}=1$ or $d_{y}=1$.
    \end{theorem}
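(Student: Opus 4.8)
The plan is to work with the equivalent formulation $\kappa_0(x,y)=1-W(m^0_x,m^0_y)/d(x,y)$ and to concentrate on an edge, i.e. $x\sim y$, so that $d(x,y)=1$; this is the only case in which the boundary statement can hold, since $m^0_x$ is simply the uniform probability measure on the neighborhood $N(x)=\{v:v\sim x\}$, each neighbor carrying mass $1/d_x$. A lower bound on $\kappa_0$ is exactly an upper bound on $W(m^0_x,m^0_y)$, and the cost $\sum_{p,q}\pi(p,q)\,d(p,q)$ of any coupling $\pi$ of $m^0_x$ with $m^0_y$ is an upper bound for $W$; so it suffices to exhibit one efficient coupling and estimate its cost. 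I would therefore \emph{not} use the dual (Lipschitz) description for the main bound, reserving it only for checking that the estimate is sharp.

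First I would record the combinatorial data. Partition the two neighborhoods as $N(x)=A\sqcup\{y\}\sqcup C$ and $N(y)=B\sqcup\{x\}\sqcup C$, where $C=N(x)\cap N(y)$ is the set of common neighbors, $A$ is the set of neighbors of $x$ that are neither $y$ nor adjacent to $y$, and $B$ is the analogous set for $y$; thus $d_x=|A|+|C|+1$ and $d_y=|B|+|C|+1$. The decisive geometric facts are: every point of $\{x\}\cup B\cup C$ lies at distance $1$ from $y$; every point of $\{y\}\cup A\cup C$ lies at distance $1$ from $x$; a common neighbor reaches a vertex of $B$ at distance at most $2$ (through $y$); and a vertex of $A$ reaches a vertex of $B$ at distance at most $3$ (through $x$ and $y$).

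Next I would build the coupling, assuming without loss of generality (by symmetry of both $W$ and the claimed bound) that $d_x\le d_y$, i.e. $1/d_x\ge 1/d_y$. The recipe is: leave mass $1/d_y$ fixed at each common neighbor (cost $0$); supply the demand $1/d_y$ at $x$ from the mass sitting on $A$, moving it the single step from $A$ to $x$; ship the mass $1/d_x$ located at $y$ to vertices of $B$, again one step each; and finally move the leftover $A$-mass to the residual demand on $B$ at distance at most $3$, with the small common-neighbor residual routed to $B$ at distance at most $2$. The point of sending $y$'s mass toward the $y$-side and filling $x$'s demand from the $x$-side, rather than pushing everything straight across the edge $xy$, is that these two distance-one transfers are precisely what lowers the cost below the trivial value $W\le W(m^0_x,\delta_x)+d(x,y)+W(\delta_y,m^0_y)=3$.

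Carrying out the arithmetic, writing $t=1/d_x$ and $s=1/d_y$ and substituting $|A|t=1-t-|C|t$, the total cost collapses to $3-2t-2s-|C|(t+2s)\le 3-2/d_x-2/d_y$, which gives $\kappa_0(x,y)=1-W\ge 2/d_x+2/d_y-2$ and even quantifies the extra gain coming from triangles. The step I expect to be the real obstacle is not this computation but getting the \emph{routing} right: a naive plan that treats the edge $xy$ as a single bottleneck loses a term of size $2/d_y$ and only yields the weaker bound $W\le 3-2/d_x$, so one must insist on filling $x$'s demand locally from $A$; one must also treat the degenerate balance cases (e.g. $|A|=0$) separately. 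Finally, for the boundary statement, if $d_x=1$ then $N(x)=\{y\}$ and $m^0_x=\delta_y$, so any coupling must move the whole unit of mass off $y$ onto $N(y)$, each target being at distance exactly $1$; hence $W(m^0_x,m^0_y)=1=d(x,y)$ and $\kappa_0(x,y)=0$ exactly, and symmetrically when $d_y=1$.
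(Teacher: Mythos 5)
Your proposal cannot be checked against an in-paper argument: this survey states the theorem without proof, citing the original paper \cite{ly09}, so I judge the proposal on its own terms. On those terms it is essentially correct, and your arithmetic checks out: with $t=1/d_x$, $s=1/d_y$, $d_x\le d_y$, the coupling you describe costs $s+t+2|C|(t-s)+3\left(|A|t-s\right)=3-2t-2s-|C|(t+2s)\le 3-\tfrac{2}{d_x}-\tfrac{2}{d_y}$, and your reading that the statement concerns adjacent $x,y$ is the right one (for non-adjacent pairs the clause $\kappa_0=0$ when $d_x=1$ fails, e.g.\ for the endpoints of a path $x\sim w\sim y$, where $m^0_x=m^0_y=\delta_w$ gives $\kappa_0(x,y)=1$). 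Two points must be made explicit to close the argument within this paper's framework. First, the paper \emph{defines} $W$ as a supremum over $1$-Lipschitz functions, not as an infimum over couplings, so you need the elementary half of Kantorovich--Rubinstein duality: for any coupling $\pi$ and any $1$-Lipschitz $f$, $\sum_z f(z)\left[m_1(z)-m_2(z)\right]=\sum_{u,v}\pi(u,v)\left[f(u)-f(v)\right]\le\sum_{u,v}\pi(u,v)\,d(u,v)$, whence every coupling cost is an upper bound for $W$; likewise, in the degree-one case the coupling only gives $W\le 1$, and the conclusion $\kappa_0=0$ needs a Lipschitz witness such as $f(v)=-d(y,v)$ to force $W\ge 1$ under the paper's definition. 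Second, the degenerate routing cases you flag (e.g.\ $|A|t<s$, or $|B|s<t$) must actually be handled, not just noted; the clean fix is to supply $x$'s demand from all of $N(x)\setminus\{y\}=A\cup C$ (every such source is at distance $1$ from $x$) and to let any overflow of $y$'s mass go anywhere in $N(y)\setminus\{x\}$ (every such target is at distance $1$ from $y$); feasibility of a coupling with $\pi(y,x)=0$ needs only $t+s\le 1$, i.e.\ $d_x,d_y\ge 2$, and then $W\le t+s+3(1-t-s)$ holds with no case analysis.

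For comparison, the proof most natural to the paper's dual definition of $W$ estimates $\frac{1}{d_x}\sum_{u\sim x}f(u)-\frac{1}{d_y}\sum_{v\sim y}f(v)$ directly for an arbitrary $1$-Lipschitz $f$, splitting off the terms $u=y$ and $v=x$: using $f(u)\le f(x)+1$ for $u\ne y$ and $f(v)\ge f(y)-1$ for $v\ne x$, the expression is at most $\left(f(x)-f(y)\right)\left(1-\tfrac{1}{d_x}-\tfrac{1}{d_y}\right)+2-\tfrac{1}{d_x}-\tfrac{1}{d_y}\le 3-\tfrac{2}{d_x}-\tfrac{2}{d_y}$, where the last step uses $|f(x)-f(y)|\le 1$ and $1-\tfrac1{d_x}-\tfrac1{d_y}\ge 0$. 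That route needs no coupling, no duality, and no case analysis, and it is the one that matches how $W$ is introduced in this survey; your primal route is longer but buys a quantified strengthening, namely the extra curvature gain $|C|(t+2s)$ coming from common neighbors, which the dual computation does not exhibit.
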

   \parskip=2\baselineskip

The lower bound can be achieved when the graph is a tree. This was showed by
 Jost and Liu\cite{Jost} recently. They also find a relation between the Ricci curvature of
 Bakry-Emery and Olliver on graphs.

We define
$$\lim_{\alpha \to 1} \frac{\kappa_\alpha(x,y)}{1-\alpha} = \kappa(x,y)$$
to be the Ricci curvature for all pairs (x,y).
From this definition, we know that
$$\kappa(x,y)\geq \kappa_0(x,y).$$

\begin{example}

\begin{enumerate}

\item The complete graph $K_n$ has a constant Ricci curvature $\frac{n}{n-1}\,$ for every edge.
\item The cycle $C_n$ for $n\geq 6$ has constant Ricci curvature $0\,$ and
$$  \kappa(C_3)=\frac{3}{2}~,~~
  \kappa(C_4)= 1~,~~
  \kappa(C_5) = \frac{1}{2}~.$$

\end{enumerate}
\end{example}

For graphs $G$ and $H$,
the Cartesian product $G\square H$ is a graph
given by $V(G)\times V(H)$ and the
two pairs $(u_1,v_1)$ and $(u_2,v_2)$ can be connected iff
$u_1=u_2$ and $v_1v_2\in E(H)$ or $u_1u_2\in E(G)$ and $v_1=v_2\,$.

\begin{theorem}[Y. Lin, L.Y. Lu, S. T. Yau\cite{LLY}]
If $G$ is $d_G$-regular and $H$ is $d_H$-regular, then

\begin{eqnarray*}
  \kappa^{G\square H}((u_1,v), (u_2,v))&=&\frac{d_G}{d_G+d_H} \kappa^G(u_1,u_2)\\
  \kappa^{G\square H}((u,v_1), (u,v_2))&=&\frac{d_H}{d_G+d_H} \kappa^H(v_1,v_2).
\end{eqnarray*}

\end{theorem}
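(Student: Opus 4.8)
The plan is to compute the transportation distance $W(m^{\alpha}_{(u_1,v)}, m^{\alpha}_{(u_2,v)})$ in $G\square H$ by reducing it to a transportation problem in $G$ alone, and then to pass to the limit $\alpha\to 1$. First I would record the elementary facts about the product: each vertex $(u,v)$ has degree $d_G+d_H$, its neighbours are the $d_G$ vertices $(u',v)$ with $u'\sim u$ together with the $d_H$ vertices $(u,v')$ with $v'\sim v$, and $d_{G\square H}((u_1,v),(u_2,v))=d_G(u_1,u_2)$. The key structural observation is that the projection $\pi_G:G\square H\to G$ pushes the product measure forward to a measure of the same type: a direct count gives $(\pi_G)_{*}m^{\alpha}_{(u,v)}=m^{\beta}_{u}$, where $\beta$ is fixed by $1-\beta=\frac{d_G}{d_G+d_H}(1-\alpha)$ (the self mass $\alpha$ together with the $d_H$ units $\frac{1-\alpha}{d_G+d_H}$ of $H$-neighbour mass collapse onto $u$). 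Moreover $m^{\alpha}_{(u_1,v)}$ and $m^{\alpha}_{(u_2,v)}$ have identical $H$-marginals, which is what will let the $H$-coordinate play no role in the optimal transport.

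Second, I would prove $W(m^{\alpha}_{(u_1,v)}, m^{\alpha}_{(u_2,v)}) = W(m^{\beta}_{u_1}, m^{\beta}_{u_2})$ by two inequalities, using the Kantorovich formulation built into the definition of $W$. For ``$\geq$'', given an optimal $1$-Lipschitz $g$ on $G$ I would set $f=g\circ\pi_G$; since adjacent vertices of $G\square H$ either share the $H$-coordinate and have adjacent $G$-coordinates or vice versa, $f$ is $1$-Lipschitz on the product, and $\sum f\,(m^{\alpha}_{(u_1,v)}-m^{\alpha}_{(u_2,v)})=\sum g\,(m^{\beta}_{u_1}-m^{\beta}_{u_2})$ by the pushforward identity, so $W^{G\square H}\geq W^{G}(m^{\beta}_{u_1},m^{\beta}_{u_2})$.

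Third --- and this is the step I expect to be the main obstacle --- I would prove ``$\leq$'' by lifting an optimal coupling. Writing each product measure as $m^{\beta}_{u_i}(a)$ times a conditional $H$-distribution, I would take an optimal $G$-coupling $\pi^{*}$ of $m^{\beta}_{u_1},m^{\beta}_{u_2}$ and define a product coupling by attaching to each pair $(a,a')$ an $H$-coupling of the two conditionals. The cost then splits as $W^{G}(m^{\beta}_{u_1},m^{\beta}_{u_2})$ plus an $H$-transport remainder, so the whole point is to make that remainder vanish. Here the conditional at $u_1$ exactly matches the conditional at $u_2$, and the conditional at any $G$-neighbour of $u_1$ matches that at any $G$-neighbour of $u_2$ (each being $\delta_v$); hence the remainder is zero provided $\pi^{*}$ transports $u_1\mapsto u_2$ and moves neighbour-mass only to neighbour-mass. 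The technical heart is therefore to exhibit an optimal $G$-coupling with this fibre-respecting structure (equivalently, an optimal potential on the product that is parallel along each $H$-edge so that the ``extra'' boundary terms cancel): the equal-$H$-marginal property from Step 1 guarantees that fibre-preserving, hence $H$-cost-free, couplings exist, and the remaining task is to select among them one whose $G$-projection is optimal.

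Finally, combining the two inequalities gives $W(m^{\alpha}_{(u_1,v)}, m^{\alpha}_{(u_2,v)}) = W(m^{\beta}_{u_1}, m^{\beta}_{u_2})$, and dividing by $d_G(u_1,u_2)=d_{G\square H}((u_1,v),(u_2,v))$ yields the clean identity $\kappa_{\alpha}^{G\square H}((u_1,v),(u_2,v))=\kappa_{\beta}^{G}(u_1,u_2)$. Now I would let $\alpha\to 1$; since $1-\beta=\frac{d_G}{d_G+d_H}(1-\alpha)$, we have $\frac{1}{1-\alpha}=\frac{d_G}{d_G+d_H}\cdot\frac{1}{1-\beta}$ and $\beta\to1$, so that $\kappa^{G\square H}((u_1,v),(u_2,v))=\lim_{\alpha\to1}\frac{\kappa_{\alpha}^{G\square H}}{1-\alpha}=\frac{d_G}{d_G+d_H}\lim_{\beta\to1}\frac{\kappa_{\beta}^{G}(u_1,u_2)}{1-\beta}=\frac{d_G}{d_G+d_H}\kappa^{G}(u_1,u_2)$, which is the first formula. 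The second formula is identical after interchanging the roles of $G$ and $H$ (and of the two coordinates), so no separate argument is needed.
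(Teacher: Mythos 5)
Your Steps 1, 2 and 4 are correct: the pushforward identity $(\pi_G)_*m^{\alpha}_{(u,v)}=m^{\beta}_{u}$ with $1-\beta=\frac{d_G}{d_G+d_H}(1-\alpha)$ checks out, lifting a $1$-Lipschitz $g$ on $G$ to $g\circ\pi_G$ does give $W^{G\square H}(m^{\alpha}_{(u_1,v)},m^{\alpha}_{(u_2,v)})\geq W^{G}(m^{\beta}_{u_1},m^{\beta}_{u_2})$, and the $\alpha\leftrightarrow\beta$ bookkeeping in the limit is exactly right. The gap is in Step 3, and it is not merely a technical task left open: the object you propose to exhibit does not exist. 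Take $G=H=K_2$ with $u_1\sim u_2$, so $G\square H=C_4$ and $\beta=\frac{1+\alpha}{2}$. Then $m^{\beta}_{u_1}$ puts mass $\beta$ at $u_1$ and $1-\beta$ at $u_2$, and $m^{\beta}_{u_2}$ the reverse; the optimal coupling is unique, it keeps the overlapping mass $1-\beta$ in place at each vertex and moves $2\beta-1$ across, and it therefore pairs centre mass of $m^{\beta}_{u_1}$ with neighbour mass of $m^{\beta}_{u_2}$. Your ``fibre-respecting'' coupling (centre $\mapsto$ centre, neighbour $\mapsto$ neighbour) costs $1>2\beta-1$, so no optimal $G$-coupling has the structure your criterion requires, and the pairwise-conditional lift of the true optimal coupling has a strictly positive $H$-remainder. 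Note that in this example $W^{G\square H}$ still equals $W^{G}(m^{\beta}_{u_1},m^{\beta}_{u_2})$: the optimal product coupling moves mass only inside the fibre $G\times\{v\}$ and rigidly along the parallel fibres $G\times\{v'\}$, which makes it $H$-cost-free without being fibre-respecting in your sense. Your argument conflates these two properties, and the conflation is exactly where it breaks: zero $H$-cost permits centre mass to be matched with neighbour mass, as long as both sit in the same fibre, but your lift construction (one $H$-coupling of the full conditionals attached to each pair $(a,a')$) cannot produce such plans.

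The strategy is salvageable, because the remainder need not vanish, only be $o(1-\alpha)$. Under any optimal $G$-coupling $\pi^{*}$, the only pairs with mismatched conditionals are $(u_1,a')$ with $a'\neq u_2$ and $(a,u_2)$ with $a\neq u_1$, and their total $\pi^{*}$-mass is at most $2(1-\beta)=O(1-\alpha)$, since $m^{\beta}_{u_2}$ carries only $1-\beta$ mass off $u_2$; each mismatch is an $H$-transport between $\delta_v$ and $\frac{\alpha}{\beta}\delta_v+\frac{1-\alpha}{\beta(d_G+d_H)}\sum_{v'\sim v}\delta_{v'}$, of cost $O(1-\alpha)$. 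Hence your lift yields $W^{G\square H}\leq W^{G}(m^{\beta}_{u_1},m^{\beta}_{u_2})+O\bigl((1-\alpha)^2\bigr)$, which with Step 2 gives $\kappa^{G}_{\beta}(u_1,u_2)\geq\kappa^{G\square H}_{\alpha}\geq\kappa^{G}_{\beta}(u_1,u_2)-O\bigl((1-\alpha)^2\bigr)$; the error term dies after dividing by $1-\alpha$, and Step 4 goes through verbatim. (Alternatively, one can get the exact identity for $\alpha$ near $1$ by transporting optimally inside the fibre $G\times\{v\}$, translating parallel fibres rigidly, and invoking complementary slackness to see that every optimal Kantorovich potential for the fibre measures is tight on the pair $(u_1,u_2)$; but the $O\bigl((1-\alpha)^2\bigr)$ estimate is the cheaper repair.) With either correction your proof is complete.
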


\begin{remark}
This theorem is not true if we replace $\kappa(x,y)$ by  $\kappa_{o}(x,y)$ . This is one
of the advantage of our modified Ricci curvature.
\end{remark}

\begin{corollary}
Suppose $G$ is regular and has constant curvature $\kappa$. Then
$G^n\,$, the $n$-th power of the Cartesian product of $G$,
has constant curvature ~$\dfrac{\kappa}{n}$.
\end{corollary}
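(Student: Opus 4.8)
The plan is to argue by induction on $n$, with the product theorem of Lin--Lu--Yau as the single engine. The base case $n=1$ is immediate, since $G^1=G$ has constant curvature $\kappa=\kappa/1$ by hypothesis. Before running the induction I would first record the degree of $G^n$: since the Cartesian product of a $d_G$-regular graph with a $d_H$-regular graph is $(d_G+d_H)$-regular (each vertex $(u,v)$ keeps its $G$-neighbors and acquires its $H$-neighbors), a trivial induction shows that if $G$ is $d$-regular then $G^n$ is $(nd)$-regular. This regularity is precisely what licenses a fresh application of the product theorem at every stage.

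For the inductive step, assume $G^n$ is $(nd)$-regular with constant curvature $\kappa/n$, and write $G^{n+1}=G^n\square G$. Every edge of a Cartesian product changes exactly one coordinate, so each edge of $G^{n+1}$ is either of the form $((w_1,v),(w_2,v))$ with $w_1w_2\in E(G^n)$, or of the form $((w,v_1),(w,v_2))$ with $v_1v_2\in E(G)$; hence the two formulas of the theorem together cover all edges. Applying it with $d_{G^n}=nd$ and $d_G=d$ gives, for the first type, $\frac{nd}{nd+d}\cdot\frac{\kappa}{n}=\frac{\kappa}{n+1}$, and for the second type, $\frac{d}{nd+d}\cdot\kappa=\frac{\kappa}{n+1}$. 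Since both edge types yield the same value, $G^{n+1}$ has constant curvature $\kappa/(n+1)$, closing the induction.

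The argument carries no serious obstacle; it is essentially bookkeeping. The only points deserving care are (i) tracking the degree $nd$ of $G^n$ so that the weight $\frac{d_{G^n}}{d_{G^n}+d_G}$ is formed correctly, and (ii) verifying that the two a priori distinct edge-types both collapse to $\kappa/(n+1)$ — the small coincidence that is exactly what allows ``constant curvature'' to survive the product. One should also flag the implicit use of associativity of $\square$ in identifying $G^{n+1}$ with $G^n\square G$, which is a standard property of the Cartesian product. If one wished to avoid invoking the inductive value $\kappa/n$ for $G^n$ and instead argue directly, the same computation would require the product theorem to be iterated $n$ times along a fixed coordinate; the inductive packaging above is cleaner and is the route I would take.
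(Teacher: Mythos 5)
Your proof is correct and follows exactly the intended derivation: the paper (a survey) states the corollary as an immediate consequence of the Lin--Lu--Yau product theorem, and your induction on $n$ using $G^{n+1}=G^n\square G$, the $(nd)$-regularity of $G^n$, and the check that both edge types yield $\frac{nd}{nd+d}\cdot\frac{\kappa}{n}=\frac{d}{nd+d}\cdot\kappa=\frac{\kappa}{n+1}$ is precisely that argument.
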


Bonnet-Myers type theorem for graphs.

\begin{theorem}[Y. Lin, L.Y. Lu, S. T. Yau\cite{LLY}]
If $\kappa(x,y)>0$, then
$$d(x,y)\leq  \left[\frac{2}{\kappa(x,y)}\right]~.$$
If for any edge ${\bar{xy}}$, $\kappa(x,y)\geq c>0$,
then
$$diam (G)\leq \frac{2}{c}\,,$$
Also, $\lambda_1 \geq c\,,$
and
\begin{eqnarray*}
Vol(G) &=& number~of~vertices~\\
&\leq& 1+ \sum_{k=1}^{\lfloor \frac{2}{c}\rfloor} \Delta^k \prod_{i=1}^{k-1}\left(1-\frac{ic}{2}\right)
\end{eqnarray*}
where $\Delta$ is the maximum degree of $G\,$.
\end{theorem}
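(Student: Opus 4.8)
The plan is to derive all four conclusions from the single transportation inequality of Lemma 2 together with the definition $\kappa(x,y)=\lim_{\alpha\to1}\kappa_\alpha(x,y)/(1-\alpha)$, exploiting each time that $W$ is a metric on probability distributions and that, for the lazy distribution, $\sum_v g(v)\,m^\alpha_x(v)=g(x)+(1-\alpha)\Delta g(x)$ for every $g$. The first inequality is then immediate: dividing Lemma 2 by $(1-\alpha)$ and letting $\alpha\to1$ gives $\kappa(x,y)\le 2/d(x,y)$, so $d(x,y)\le 2/\kappa(x,y)$, and since the left-hand side is an integer this sharpens to $d(x,y)\le\lfloor 2/\kappa(x,y)\rfloor$.

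For the diameter bound the key step I would isolate is that an edgewise curvature bound propagates to all pairs. Given $x,y$ at distance $n$, I fix a geodesic $x=x_0\sim\cdots\sim x_n=y$ and use the triangle inequality for $W$ to get $W(m^\alpha_x,m^\alpha_y)\le\sum_i W(m^\alpha_{x_i},m^\alpha_{x_{i+1}})=\sum_i(1-\kappa_\alpha(x_i,x_{i+1}))$; rearranging into a lower bound for $\kappa_\alpha(x,y)$, dividing by $(1-\alpha)$, and passing to the limit yields $\kappa(x,y)\ge\frac1n\sum_i\kappa(x_i,x_{i+1})\ge c$. Combined with the first inequality this gives $d(x,y)\le 2/c$ for every pair, i.e. $\mathrm{diam}(G)\le 2/c$.

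For $\lambda_1\ge c$ I would run a discrete Lichnerowicz argument. Let $f$ be an eigenfunction, $-\Delta f=\lambda_1 f$, with Lipschitz constant $L$; a telescoping estimate along a geodesic shows $L$ is attained on some edge $x\sim y$, say $f(x)-f(y)=L$. Writing $P^\alpha f(x)=\sum_v f(v)m^\alpha_x(v)$, the identity of the first paragraph together with $\Delta f=-\lambda_1 f$ gives $P^\alpha f(x)-P^\alpha f(y)=L\bigl(1-(1-\alpha)\lambda_1\bigr)$, while the Kantorovich form of $W$ applied to the $1$-Lipschitz function $f/L$ bounds the same quantity by $L\bigl(1-\kappa_\alpha(x,y)\bigr)$. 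Comparing, dividing by $(1-\alpha)$, and letting $\alpha\to1$ gives $\lambda_1\ge\kappa(x,y)\ge c$.

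The volume estimate is where I expect the real work, since it needs more than soft duality. I would fix $x_0$, set $\rho(v)=d(x_0,v)$ and $S_k=\{\rho=k\}$, and first prove a discrete Laplacian comparison: applying Kantorovich duality to the $1$-Lipschitz function $\rho$ across an outward edge $x\sim y$ (so $\rho(y)=\rho(x)+1$) and letting $\alpha\to1$ gives $\Delta\rho(y)\le\Delta\rho(x)-c$; telescoping along a geodesic from $x_0$, where $\Delta\rho(x_0)=1$, yields $\Delta\rho(v)\le 1-kc$ for $v\in S_k$. Writing $d^+,d^0,d^-$ for the counts of neighbors of $v$ in $S_{k+1},S_k,S_{k-1}$ and using $\Delta\rho(v)=(d^+-d^-)/d_v$ with $d^++d^0+d^-=d_v$, adding $d^+-d^-\le d_v(1-kc)$ to $d^++d^-\le d_v$ gives $d^+\le d_v\bigl(1-kc/2\bigr)\le\Delta\bigl(1-kc/2\bigr)$ — this is the factor-of-two bookkeeping that turns the comparison constant $c$ into the $c/2$ of the statement. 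Since each vertex of $S_{k+1}$ has a backward neighbor, $|S_{k+1}|\le\sum_{v\in S_k}d^+(v)\le\Delta(1-kc/2)|S_k|$, and iterating from $|S_0|=1$ and summing over the admissible range $0\le k\le\lfloor 2/c\rfloor$ produces the claimed bound. The main obstacle is thus not any single inequality but assembling this volume estimate: recognizing the distance function as the right test object, proving the discrete Laplacian comparison, and carrying out the two-sided degree accounting; the propagation lemma of the second paragraph is the other load-bearing ingredient, since the diameter conclusion rests entirely on it.
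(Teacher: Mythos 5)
Your proposal is correct, and all four steps check out: the identity $\sum_v g(v)\,m^\alpha_x(v)=g(x)+(1-\alpha)\Delta g(x)$, the propagation of edgewise curvature to pairs via the triangle inequality for $W$, the contraction argument at an edge realizing the Lipschitz constant of the eigenfunction, and the comparison $\Delta\rho(y)\le\Delta\rho(x)-c$ across outward edges leading to $d^+\le d_v\left(1-\frac{kc}{2}\right)$ and hence $|S_{k+1}|\le\Delta\left(1-\frac{kc}{2}\right)|S_k|$ all hold as you state them. Note that this survey only states the theorem, citing \cite{LLY} for the proof; measured against that source, your argument is essentially the intended one (Lemma 2 plus integrality, $W$-triangle inequality, Ollivier-style spectral gap bound, and sphere-growth bookkeeping), so there is nothing to flag beyond the implicit use of finiteness (guaranteed here by the diameter bound and bounded degree) when asserting that the Lipschitz constant of the eigenfunction is attained.
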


A random graph $G(n,p)$ is a graph on $n$ vertices in which a pair of vertices appears as an edge with probability $p$.

For a random graph $G(n,p)$, we have

\begin{theorem} [Y. Lin, L.Y. Lu, S. T. Yau\cite{LLY}]
Suppose that $xy$ is an edge of random graph $G(n,p)$. The following
statements hold for the curvature $\kappa(x,y)$.
\begin{enumerate}

\item   If $p\geq \sqrt[3]{(\ln n)/n}$,  almost surely, we have
$$\kappa(x,y)=p+O\left(\sqrt{\frac{\ln n}{np}}\right).$$
In particular, if $p\gg \sqrt[3]{(\ln n)/n}$, almost surely,
we have $\kappa(x,y)=(1+o(1))p$.

\item  If $\sqrt[3]{(\ln n)/n}> p \geq 2\sqrt{(\ln n)/n}$,
almost surely, we have
$$\kappa(x,y)=O\left(\frac{\ln n}{np^2}\right).$$
\item  If $1/\sqrt{n}\gg p\gg \sqrt[3]{(\ln n)/n^2}$,
 almost surely, we have
$$\kappa(x,y)=-1+O(np^2)+O(\frac{\ln n}{n^2p^3}).$$
\item  If $\sqrt[3]{1/n^2}\gg p\gg \frac{\ln n}{n}$,
 almost surely, we have
$$\kappa(x,y)=-2+O(n^2p^3)+ O\left( \sqrt{\frac{\ln n}{np}} \right).$$
\end{enumerate}
\end{theorem}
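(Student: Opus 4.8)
The plan is to reduce the curvature of a fixed edge $xy$ to a finite linear program governed by the local structure of $G(n,p)$ around $x$ and $y$, and then to show the data of this program concentrate under Chernoff and union bounds, regime by regime. First I would make the limit defining $\kappa(x,y)$ explicit: writing $\varepsilon=1-\alpha$ and expanding the Kantorovich dual for $W(m^\alpha_x,m^\alpha_y)$ to first order in $\varepsilon$, the zeroth-order term $(1-\varepsilon)(f(x)-f(y))$ forces any near-optimal $1$-Lipschitz $f$ to satisfy $f(x)-f(y)=1$, the concavity of $\kappa_\alpha$ (Lemma 1) justifying the passage to the derivative. This yields
$$\kappa(x,y)=1-\sup_f\Big[\frac{1}{d_x}\sum_{z\sim x}f(z)-\frac{1}{d_y}\sum_{w\sim y}f(w)\Big],$$
the supremum over $1$-Lipschitz $f$ normalised by $f(x)=1,\ f(y)=0$ (one checks this against $K_n$, $C_4$, $C_5$). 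Splitting the neighbours of $x,y$ into the common neighbours $T=\{z:z\sim x,\ z\sim y\}$ and the private sets $S_x,S_y$, and using $f(z)\in[0,2]$ for $z\in S_x$, $f(w)\in[-1,1]$ for $w\in S_y$, the supremum becomes a transportation-type program whose only genuinely binding constraints are $f(z)-f(w)\le d(z,w)\in\{1,2,3\}$ for $z\in S_x,\ w\in S_y$. When $T$ is empty and $d_x=d_y$, its value equals $3|S_x|$ minus the minimum vertex cover — equivalently, by König duality, the maximum matching — of the bipartite graph $H$ on $S_x\sqcup S_y$ joining pairs at distance $\le 2$.

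Second I would establish the concentration. For a fixed edge, Chernoff bounds give $d_x,d_y=np\,(1+O(\sqrt{\ln n/(np)}))$ once $np\gg\ln n$; the common-neighbour count is $|T|=(1+o(1))np^2$ once $np^2\gg\ln n$; the number of cross-edges (distance-$1$ pairs) is $(1+o(1))n^2p^3$; and each vertex of $H$ has degree $(1+o(1))n^2p^3$, so that once $n^2p^3\gg\ln n$ the graph $H$ admits a matching covering all but an $O(\ln n/(n^2p^3))$-fraction of its vertices. Substituting into the program reads off the four regimes. When $n^2p^3\ll 1$ (regime 4) $H$ is almost edgeless and the value is $3|S_x|-O(n^3p^4)$, giving $\kappa=-2+O(n^2p^3)$ up to degree fluctuations $O(\sqrt{\ln n/(np)})$. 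When $n^2p^3\gg\ln n$ but $np^2\ll 1$ (regime 3) the near-perfect matching of $H$ removes a full $|S_x|(1+o(1))$, dropping the value to $2|S_x|$ and giving $\kappa=-1+O(np^2)+O(\ln n/(n^2p^3))$. When $np^2\gg\ln n$ (regimes 1–2) common neighbours dominate: $H$ becomes essentially complete bipartite (the graph has diameter two), and balancing its maximal matching correction against the $|T|\sim np^2$ contribution and the $d_x-d_y$ discrepancy leaves the small positive values $\kappa=p+O(\sqrt{\ln n/(np)})$ (regime 1) and $\kappa=O(\ln n/(np^2))$ (regime 2).

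The hardest step is the analysis of $H$ in regime 3: it is the near-perfect matching of the distance-$\le 2$ graph — not the sparse set of genuine triangles or cross-edges — that pins the leading constant at $-1$ rather than $-2$, and establishing it needs the sharp threshold $n^2p^3\gg\ln n$ together with a second-moment and Chernoff control of the distance-$2$ degrees uniform over $S_x\cup S_y$. Simultaneously one must keep the competing error sources — degree and common-neighbour fluctuations, the residual contribution of $|T|$, and the matching deficiency of $H$ — cleanly separated so as to land the stated $O(\cdot)$ terms at each scale; the regime boundaries $(\ln n/n)^{1/3}$, $\sqrt{\ln n/n}$ and $(\ln n/n^2)^{1/3}$ are exactly the concentration thresholds for $|T|$ and for the degrees of $H$.
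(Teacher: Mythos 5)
You should first note a mismatch with the assignment itself: this survey states the random--graph theorem without any proof, citing \cite{LLY}, so there is no in-paper argument to compare against; your plan has to be measured against the original Lin--Lu--Yau proof. In outline you do follow the same route as that paper: pass to the $\alpha\to 1$ limit (justified by concavity), dualize $W(m^\alpha_x,m^\alpha_y)$ into a finite linear program over $1$-Lipschitz functions normalized by $f(x)=1$, $f(y)=0$, split $N(x)\cup N(y)$ into common neighbours $T$ and private sets $S_x,S_y$, convert the program into a matching/transfer-plan problem between $S_x$ and $S_y$ at distances $1,2,3$, and feed in Chernoff and union bounds regime by regime. Your limit formula is correct (it reproduces $\kappa(K_n)=\frac{n}{n-1}$ and $\kappa(C_5)=\frac12$), the identification of the binding constraints is right, the ``$3|S_x|$ minus maximum matching'' evaluation via K\"onig duality is sound, and your regimes 3 and 4 land the correct constants $-1$ and $-2$ with the correct error terms.

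The genuine gap is in your regimes 1--2. You claim that ``common neighbours dominate,'' that $H$ (pairs of $S_x\times S_y$ at distance $\le 2$) becomes essentially complete bipartite, and that balancing its matching correction against the $|T|\sim np^2$ contribution yields $\kappa=p+O\bigl(\sqrt{\ln n/(np)}\bigr)$. This mechanism cannot produce that answer. A perfect matching of $H$ along distance-$2$ pairs only caps the transport cost at $2$ per unit of private mass, i.e.\ it gives $\kappa\approx -1+O(p)$, and $|T|/d_x\approx p$ is a $p$-fraction of the mass, not a dominant one; so the plan as written concludes the wrong leading constant in regime 1. What pins the value at $p$ is that the private mass moves distance exactly $1$: the distance-$1$ cross graph between $S_x$ and $S_y$ (parts of size $\approx np$, edge probability $p$, hence average degree $\approx np^2$) has, precisely when $np^2\gg\ln n$ (the regime 1--2 hypothesis, and the real meaning of the threshold $\sqrt{\ln n/n}$), a matching covering all but $O(|d_x-d_y|)=O(\sqrt{np\ln n})$ vertices; the unmatched fraction $O\bigl(\sqrt{\ln n/(np)}\bigr)$ is exactly the stated error, and regime 2 follows because there both $p$ and $\sqrt{\ln n/(np)}$ are $O\bigl(\ln n/(np^2)\bigr)$. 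Your own concentration inventory (the cross-edge count $\approx n^2p^3$) contains the needed ingredient, but it is never invoked where it matters. A secondary, smaller issue, which you at least flag: the edges of $H$ coming from distance-$2$ pairs are not independent (they share intermediate vertices), so the near-perfect-matching claim in regime 3 cannot be quoted from an off-the-shelf bipartite random-graph theorem and needs the Hall-condition/union-bound argument carried out by hand, as in \cite{LLY}.
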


Note that we say that a property $P$ is almost surely satisfied if
the limit of the probability that $P$ holds goes to $1$ as $n$ goes
to infinity.

\subsection{Ricci-flat graphs}

Ricci-flat manifolds are Riemannian manifolds with Ricci curvature vanishes. In Physics, they represent vacuum solutions to the analogues of Einstein's equations for Riemannian Manifolds of any dimension, with vanishing cosmological constance. The important class of Ricci-flat manifolds is Calabi-Yau manifolds. This follows from Yau's proof of the Calabi conjecture, which implies that a compact $K\ddot{a}hler$ manifold with a vanishing first real Chern class has a $K\ddot{a}hler$ metric in the same class with vanishing Ricci curvature. There are many works to find the Calabi-Yau manifolds. Yau conjectured that there are finitely many topological types of compact Calabi-Yau manifolds in each dimension. This conjecture is still open. In this paper, we will use our modified Ollivier's Ricci curvature study this question on graphs.

We recall the definition of Ricci curvature on graphs introduced by Fan Chung and Yau in 1996\cite{cy96}.
We say that a regular graph $G$ has a local $k$-frame at a vertex $x$ if there exist injective mappings $\eta_1,\dots,\eta_k$ from a neighborhood of $x$ into $V$ so that
\begin{enumerate}
\item $x$ is adjacent to $\eta_ix$ for $1\leq i \leq k$;
\item $\eta_i x \ne \eta_j x$ if $i\ne j$
\end{enumerate}
The graph $G$ is said to be Ricci-flat at x if there is a local $k$-frame in a neighborhood of x so that for all i,
\begin{displaymath}
\cup_j (\eta_i\eta_j)x = \cup_j(\eta_j\eta_i)x.
\end{displaymath}

It is easy to show that the Ricci flat graphs defined by Chung and Yau are the Ricci-flat graphs in the sense of Ollivier's definition, and are the graphs with non-negative Ricci curvature of our modified definition.

The girth of a graph is the length of a shortest cycle contained in the graph.

The following theorem is our main result:

\begin{theorem} [Y. Lin, L.Y. Lu, S. T. Yau\cite{RF}]
Suppose that $G$ is a Ricci flat graph with girth $g(G) \geq 5$, then $G$ is one of the following graphs:
\begin{enumerate}
\item the infinite line
\item cycle $C_n$ with $n\geq 6$
\item the dodecahedral graph
\item the Petersen graph
\item the half-dodecahedral graph
\end{enumerate}
\end{theorem}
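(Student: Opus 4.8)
The plan is to reduce everything to a local analysis around a single edge, exploiting that $g(G)\ge 5$ forbids both triangles and $4$-cycles. First I would record the local structure: a Ricci-flat graph in the sense recalled above is regular, say $d$-regular, and the absence of $3$- and $4$-cycles makes the ball of radius two about any vertex $x$ a tree on $1+d+d(d-1)$ vertices, so that two distinct neighbors of $x$ have no common neighbor other than $x$. I would then read the flatness hypothesis into this picture in the two equivalent forms available from the excerpt: the Chung--Yau frame-closure identity $\bigcup_j(\eta_i\eta_j)x=\bigcup_j(\eta_j\eta_i)x$, and the vanishing of the modified curvature $\kappa(x,y)=0$ on every edge $xy$. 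The key observation is that under $g(G)\ge 5$ either condition can only be realized through pentagons: a ``commuting square'' $\eta_i\eta_j x=\eta_j\eta_i x$ would close a $4$-cycle, which is forbidden, so the closure identity must be matched by a nontrivial permutation of directions that is carried by $5$-cycles.

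The heart of the argument, and the step I expect to be the main obstacle, is the degree bound $d\le 3$. I would derive it from the optimal-transport description of $\kappa(x,y)$: with no triangle and no $4$-cycle on $xy$, the mass at the $d-1$ neighbors of $x$ other than $y$ must be moved to the $d-1$ neighbors of $y$, and on the pure tree this forces the negative value $\kappa=\frac{4}{d}-2$; the only way to raise it toward $0$ is to create pentagons through $xy$, each of which shortens one transport path. Imposing $\kappa(x,y)=0$ therefore demands a prescribed number of pentagons at every edge, and I would show that for $d\ge 4$ this number is too large to be realized without producing a triangle or a $4$-cycle, contradicting $g(G)\ge 5$. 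Equivalently, in the frame language, the permutation pairing the rows and columns of the array $(\eta_i\eta_j x)_{i,j}$ cannot be made consistent for $d\ge 4$ while keeping all second neighbors distinct. This leaves $d\in\{2,3\}$.

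For $d=2$ the classification is immediate: a connected $2$-regular graph is a finite cycle $C_n$ or the two-way infinite line. The infinite line is flat and has no short cycles, while among the cycles the values recorded earlier, $\kappa(C_3)=\frac{3}{2}$, $\kappa(C_4)=1$, $\kappa(C_5)=\frac{1}{2}$, are all positive (a single pentagon already raises the curvature above $0$), so $\kappa\equiv 0$ together with $g(G)\ge 5$ leaves exactly $C_n$ with $n\ge 6$.

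The remaining and most laborious case is $d=3$. Here I would grow the graph outward from one edge: the local flatness-plus-girth-$5$ constraints determine, shell by shell, how the second- and third-neighborhoods must be joined, since the pentagon bookkeeping that makes $\kappa\equiv 0$ pins down the adjacencies between consecutive shells up to symmetry. A finite case analysis then shows that this forced growth must close up, yielding exactly one of the three cubic graphs listed: the dodecahedral graph, the Petersen graph, or the half-dodecahedral graph. The real difficulty in this last step is global rather than local: one must check that every locally forced configuration actually closes consistently and that no other cubic graph of girth $5$ survives, and this global closure argument is where the bulk of the combinatorial verification will lie.
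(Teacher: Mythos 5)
The survey you were given never proves this theorem: it is quoted from the preprint \cite{RF}, so your attempt can only be judged against the definitions and the statement, not against an in-paper argument. Your overall architecture --- an optimal-transport computation at a single edge under the girth hypothesis, a degree bound, then a closing case analysis for cubic graphs --- is the right shape, and your $d=2$ case is correct; but there are genuine gaps. (i) You assert at the outset that a Ricci-flat graph of girth at least $5$ is regular. Nothing in the definition of $\kappa$ gives this: regularity is itself a lemma that must be proved by running the transport analysis on an edge $xy$ with $d_x\neq d_y$. It is not a throwaway step; the easy Lipschitz-function bound only gives $\kappa(x,y)\leq \frac{2}{d_x}+\frac{1}{d_y}-1$ (and its symmetric counterpart), which does not even exclude $d_x=2$, $d_y=3$. (ii) You treat the Chung--Yau frame identity and the vanishing of the modified curvature as ``two equivalent forms'' of the hypothesis. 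They are not equivalent: the paper states only one-way implications (Chung--Yau Ricci-flat implies Ollivier-flat and implies \emph{non-negative} modified curvature), and its Chung--Yau examples all have girth at most $4$. Any step leaning on the frame identity is proving a different theorem. (iii) Your stated obstruction for $d\geq 4$ --- that the required pentagons ``cannot be realized without producing a triangle or a $4$-cycle'' --- is not the actual reason; a bundle of pentagons through $xy$ never violates girth $5$. The correct accounting is sharper: writing $u_1,\dots,u_{d-1}$ for the neighbors of $x$ other than $y$ and $v_1,\dots,v_{d-1}$ for those of $y$ other than $x$, even if \emph{every} pair $(u_i,v_j)$ were joined by a path of length $2$, the $1$-Lipschitz function with $f(u_i)=2$, $f(x)=1$, $f(y)=f(v_j)=0$ (extended to all of $V$) certifies $\kappa(x,y)\leq \frac{3}{d}-1$, which is negative for all $d\geq 4$. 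It is this inequality, not a girth violation, that forces $d\leq 3$; and at $d=3$ its equality case shows that flatness at an edge amounts to a perfect ``pentagon matching'' between $\{u_1,u_2\}$ and $\{v_1,v_2\}$.

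The more serious gap is that the case $d=3$ --- which is essentially the entire content of the theorem, the $d=2$ case being trivial --- is not carried out at all. Saying that the forced shell-by-shell growth ``must close up, yielding exactly one of the three cubic graphs'' is a restatement of the conclusion, not an argument: there are infinitely many cubic graphs of girth $5$, so one must actually show that imposing the pentagon-matching condition at every edge is so rigid that, starting from a single edge, the adjacencies between successive shells are determined up to finitely many choices, and that each choice either closes into the dodecahedral, Petersen, or half-dodecahedral graph, or else contradicts girth $\geq 5$ or flatness at some already-constructed edge. That finite but lengthy combinatorial closure argument is the bulk of the cited work \cite{RF}; your proposal acknowledges it rather than supplies it, so as written it is a plausible research plan, not a proof.
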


\begin{figure}
\centering
\subfigure[Dodecahedral]{\includegraphics[width=1.3in]{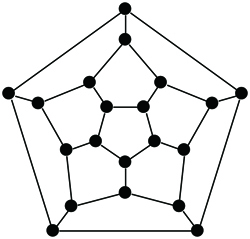}}
\subfigure[Petersen]{\includegraphics[width=1.3in]{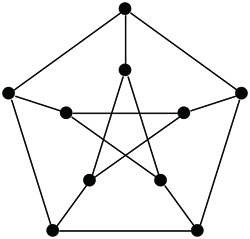}}
\subfigure[Half-dodecahedral]{\includegraphics[width=1.3in]{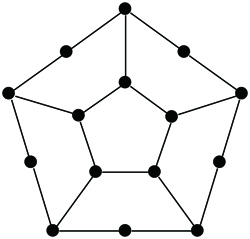}}

\end{figure}

Since the Cartesian product of two regular Ricci-flat graphs is still Ricci-flat, there are infinite number of Ricci-flat graphs with girth 4.
Same happens for girth 3.

The homogeneous graph $\Gamma$ associated with an abelian group $\mathcal{H}$ is Ricci-flat in the sense of Chung and Yau. That means that $\mathcal{H}$ is a subgroup of automorphism group of $\Gamma$ and $\mathcal{H}$ acts transitively on the vertex set $V$ of $\Gamma$, i.e. for any two vertex $u$ and $v$ there is an $f \in \mathcal{H}$ such that $f(u) = v$. So there are tremendous number of Ricci-flat graphs in the sense of Chung and Yau and therefore Ricci-flat graphs in the sense of Ollivier. They are either girth 4 or 2. Our modified Ricci curvature are good definition to classify the Ricci-flat graphs.

\section{Exterior forms on digraphs(A. Grigor'yan, Y. Lin, Y. Muranov, S. T. Yau\cite{DF})}

\subsection{homology and cohomology of digraphs}

 A differential calculus on an associative algebra A is an algebraic analogue of the calculus of differential forms on a smooth manifold. The discrete differential calculus is based on the universal differential calculus on an associative algebra of functions on discrete set. By a natural way, we can consider this calculus as a calculus on a universal digraph with the given discrete set of vertexes. This approach gives an opportunity to define differential calculus for every subgraph of the universal digraph.

Given a finite set $V$, we define a $p$-form $\omega$ on $V$ as $\mathbb{K}$-valued function on $V^{p+1}$. The set of all $p$-forms is a linear space over $\mathbb{K}$ that is denoted by $\Lambda^p$. It has a canonical basis $e^{i_0\dots i_p}$. For any $\omega \in \Lambda^p$ we have
\begin{displaymath}
\omega = \sum_{i_0,\dots,i_p\in V}\omega_{i_0\dots i_p}e^{i_0\dots \i_p}
\end{displaymath}
where $\omega_{i_0\dots i_p} = \omega(i_0,\dots,i_p)$. The exterior derivative $d$:$\Lambda^p \rightarrow \Lambda^{p+1}$ is defined by
\begin{displaymath}
(d\omega)_{i_0\dots i_{p+1}} = \sum^{p+1}_{q=0}(-1)^q \omega_{i_0\dots \hat{i_q}\dots i_{p+1}}
\end{displaymath}
and satisfies $d^2 = 0$, where the hat $\hat{i_q}$ means omission of the index $i_q$.

We define a subspace $\mathcal{R}^p \subset \Lambda^p$ of regular forms that is spanned by $e_{i_0\dots\i_p}$ with regular paths $i_0\dots i_p$ (when$i_k \ne i_{l+1}$), and observed that the spaces $\mathcal{R}^p$ are invariant for $d$.

A $p$-path on $V$ is a formal linear combination of the elementary $p$-paths $e_{i_0\dots i_p}\equiv i_0\dots i_p$ and the linear space of all $p$-paths is denoted by $\Lambda_p$. For any $v\in \Lambda_p$ we have
\begin{displaymath}
v = \sum_{i_0,\dots,i_p\in V}v^{i_0\dots i_p}e_{i_0\dots i_p}
\end{displaymath}
and a pairing with a $p$-path $\omega$
\begin{displaymath}
(\omega,v) = \sum_{i_0,\dots,i_p}\omega_{i_0\dots i_p}v^{i_0\dots i_p}
\end{displaymath}
The dual operator $\partial: \Lambda_{p+1} \rightarrow \Lambda_p$ is given by
\begin{displaymath}
\partial e_{i_0\dots i_{p+1}} = \sum_{q=0}^{p+1} (-1)^q e_{i_0\dots \hat{i_q}\dots i_{p+1}}
\end{displaymath}

Let $I_p$ be the subspace of $\Lambda_p$ that is spanned by $e_{i_0\dots i_p}$ with irregular paths $i_0\dots i_p$. Then the spaces $I_p$ are invariant for $\partial$, which allows to define $\partial$ on the quotient spaces $\mathcal{R} = \Lambda_p / I_p$. For simplicity of notation we identify the elements of $\mathcal{R}_p$ with their representatives that are regular $p$-paths. Then $e_{i_0\dots i_p}$ with irregular $i_0\dots i_p$ are treated as zeros.

A digraph is a pair of $(V,E)$ where $V$ is an arbitrary set and $E$ is a subset of $V\times V \backslash \text{diag}$. The elements of $V$ are called \textit{vertices} and the elements of $E$ are called(\textit{directed}) \textit{edges}. The set $V$ will be always assumed non-empty and finite.

Let $i_0\dots i_p$ be an elementary regular $p$-path on V. It is called \textsl{allowed} if $i_ki_{k+1}\in E$ for any $k = 0,\dots,p-1$, and \textsl{non-allowed} otherwise. The set of all allowed elementary $p$-paths will be denoted by $E_p$,and non-allowed by $N_p$. For example $E_0 = V$ and $E_1 = E$.

Denote by $\mathcal{A}_p = \mathcal{A}_p(V,E)$ the subspace of $\mathcal{R}_p$ spanned by the allowed elementary $p$-paths, that is,
\begin{displaymath}
\begin{split}
\mathcal{A}_p &= span\{e_{i_0\dots i_p}:i_0\dots i_p \in E_p\} \\
&= \{ v\in \mathcal{R}_p : v^{i_0\dots i_p} = 0 \: \forall i_0\dots i_p \in N_p\}
\end{split}
\end{displaymath}
The elements of $\mathcal{A}_p$ are called allowed $p$-paths.

Similarly, denote by $\mathcal{N}^p$ the subspace of $\mathcal{R}^p$, spanned by the non-allowed elementary $p$-forms, that is,
\begin{displaymath}
\begin{split}
\mathcal{N}^p &= span\{e^{i_0\dots i_p}: i_0\dots i_p \in N_p\} \\
&= \{ \omega \in \mathcal{R}^p : \omega_{i_0\dots i_p} = 0 \: \forall i_0\dots i_p \in E_p\}
\end{split}
\end{displaymath}
Clearly, we have $\mathcal{A}_p = (\mathcal{N}^p)^{\perp}$ where $\perp$ refers to the annihilator subspace with respect to the couple ($\mathcal{R}^p,\mathcal{R}_p$) of dual spaces.

We would like to reduce the space $\mathcal{R}^p$ of regular $p$-forms so that the non-allowed forms can be treated as zeros. Consider the following subspaces of spaces $\mathcal{R}^p$
\begin{displaymath}
\boxed
{\mathcal{J}^p\equiv \mathcal{J}^p(V,E):= \mathcal{N}^p + d\mathcal{N}^{p-1}}
\end{displaymath}
that are $d$-invariant, and define the space $\Omega^p$ of $p$-forms on the digraph $(V,E)$ by
\begin{displaymath}
\boxed{\Omega^p \equiv \Omega^p(V,E):=\mathcal{R}^p/\mathcal{J}^p}
\end{displaymath}

Then $d$ is well-defined on $\Omega^p$ and we obtain a cochain complex
\begin{displaymath}
0 \rightarrow \Omega^0 \overset{d}{\longrightarrow} \dots \overset{d}{\longrightarrow} \Omega^p \overset{d}{\longrightarrow} \Omega^{p+1} \overset{d}{\longrightarrow} \dots
\end{displaymath}

Shortly we write $\Omega = \mathcal{R}/\mathcal{J}$ where $\Omega$ is the complex and $\mathcal{R}$ and $\mathcal{J}$ refer to the corresponding cochain complexes.

If the digraph $(V,E)$ is complete, that is, $E=V\times V \setminus diag$ then the spaces $\mathcal{N}^p$ and $\mathcal{J}^p$ are trivial, and $\Omega^p = \mathcal{R}^p$.

Consider the following subspaces of $\mathcal{A}_p$
\begin{displaymath}
\boxed
{\Omega_p \equiv \Omega_p(V,E) = \{v\in \mathcal{A}_p:\partial v \in \mathcal{A}_{p-1}\}}
\end{displaymath}
that are $\partial$-invariant. Indeed, $v\in \Omega_p \Rightarrow \partial v \in \mathcal{A}_{p-1} \subset \Omega_{p-1}$. The elements of $\Omega_p$ are called $\partial$-invariant $p$-paths.

We obtain a chain complex $\Omega$
\begin{displaymath}
0 \longleftarrow \Omega_0 \overset{\partial}{\longleftarrow} \Omega_1\overset{\partial}{\longleftarrow} \dots\overset{\partial}{\longleftarrow} \Omega_{p-1}\overset{\partial}{\longleftarrow} \Omega_{p} \overset{\partial}{\longleftarrow}\dots
\end{displaymath}
that, in fact, is dual to $\Omega$.

By construction we have $\Omega_0 = \mathcal{A}_0$ and $\Omega_1 = \mathcal{A}_1$ so that
\begin{displaymath}
dim \Omega_0 = \mid V\mid \quad and \quad dim\Omega_1 = \mid E\mid,
\end{displaymath}
while in general $\Omega_p \subset \mathcal{A}_p$

Let us define the (co)homologies of the digraph $(V,E)$ by
\begin{displaymath}
H^p(V,E) := H^p(\Omega)\quad and\quad H_p(V,E) := H_p(\Omega)
\end{displaymath}
Recall that $H^p(V,E)$ and $H_p(V,E)$ are dual and hence their dimensions are the same. The values of $dim H_p(V,E)$ can be regarded as invariants of the digraph $(V,E)$.

Note that for any $p\geq 0$
\begin{displaymath}
dim H_p(\Omega) = dim \: \Omega_p - dim \: \partial \Omega_p - dim\partial \Omega_{p+1}
\end{displaymath}
Let us define the Euler characteristic of the digraph $(V,E)$ by
\begin{displaymath}
\chi(V,E) = \sum^{n}_{p=0}(-1)^p dim H_p(\Omega)
\end{displaymath}

provided $n$ is so big that
\begin{displaymath}
dim H_p(\Omega) = 0 \quad \forall \: p > n
\end{displaymath}

\begin{Example}
Consider the graph of 6 vertices $V = \{0,1,2,3,5\}$ with 8 edges $E = \{01,02,13,14,23,24,53,54\}$.

\begin{figure}
\centering
\includegraphics[width=1.2in]{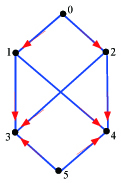}
\caption{1}
\label{fig1}
\end{figure}
\end{Example}

Let us compute the spaces $\Omega_p$ and the homologies $H_p(\Omega)$. We have
\begin{displaymath}
\begin{split}
&\Omega_0  =  \mathcal{A}_0 = span\{e_0,e_1,e_2,e_3,e_4,e_5\},\quad dim\Omega_0 = 6\\
&\Omega_1  =  \mathcal{A}_1 = span\{e_{01},e_{02},e_{13},e_{14},e_{23},e_{24},e_{53},e_{54}\},\quad dim\Omega_1 = 8\\
&\mathcal{A}_2 = span\{e_{013},e_{014},e_{023},e_{024}\},\quad dim\mathcal{A}_2 = 4
\end{split}
\end{displaymath}

The set of semi-edges(we say a pair $ij$ of vertices is a \textsl{semi-edge} if it is not an edge but there is a vertex $k$ so that $ik$ and $kj$ are edges) is $\mathcal{S} = \{e_{03},e_{04}\}$ so that $dim\Omega_2 = dim\mathcal{A}_2 - \mid S\mid =2$. The basis in $\Omega_2$ can be easily spotted as each of two squares $0,1,2,3$ and $0,1,2,4$ determine a $\partial$-invariant 2-path, whence
\begin{displaymath}
\Omega_2 = span\{e_{013} - e_{023},e_{014} - e_{024}\}
\end{displaymath}

Since there are no allowed 3-paths, we see that $\mathcal{A}_3 = \Omega_3 = \{0\}$. It follows that
\begin{displaymath}
\chi = dim \Omega_0 - dim \Omega_1 + dim \Omega_2 = 6 - 8 + 2 =0
\end{displaymath}

Let us compute dim $H_2$:
\begin{displaymath}
dim H_2 = dim\Omega_2 - dim\partial\Omega_2 - dim\partial\Omega_3 = 2 - dim\partial\Omega_2.
\end{displaymath}
The image $\partial\Omega_2$ is spanned by two 1-paths
\begin{displaymath}
\begin{split}
\partial(e_{013}-e_{023}) &= e_{13} - e_{03} + e_{01} - (e_{23} - e_{03} +e_{02})\\ &= e_{13} + e_{01} - e_{23} - e_{02}\\
\partial(e_{014}-e_{024}) &= e_{14} - e_{04} + e_{01} - (e_{24} - e_{04} +e_{02})\\ &= e_{14} + e_{01} - e_{24} - e_{02}
\end{split}
\end{displaymath}
that are clearly linearly independent. Hence, $dim \partial\Omega_2 = 2$ whence $dim H_2 = 0$.

 The dimension of $H_1$ can be computed similarly, but we can do easier using the Euler characteristic:
\begin{displaymath}
dim H_0 - dim H_1 + dim H_2 = \chi = 0
\end{displaymath}
whence dim $H_1 = 1$.

In fact, a non-trivial element of $H_1$ is determined by $1$-path
\begin{displaymath}
v = e_{13} = e_{14} -e_{53} +e_{54}
\end{displaymath}
Indeed, by a direct computation $\partial v = 0$, so that $v\in ker\partial\mid_{\Omega_1}$while for $v$ to be in $Im\partial\mid_{\Omega_2}$ it should be a linear combination of $\partial(e_{013}-e_{023})$ and $\partial(e_{014}-e_{024})$,which is not possible since they do not have the term $e_{54}$.

We can do some transformations of digraphs to get the homology and cohomology group for new graphs.
\

We can define the Hodge Laplacian on $p$-forms, and study the spectrum of such Laplacian. There are interesting properties of these spectrum,
such as the torsion defined by them, that can be developed parallel to Riemannian geometry.

Naturally these spectrum are invariants of the graph that can give a great deal of information about the graph.

\subsection{Minimal paths and hole detection}

The elements of $H_p(V,E)$ can be regarded as $p$-dimensional holes in the digraph $(V,E)$. To make this notion more geometric, we can work with representatives of the homologies classes, which are closed $p$-paths. We say that two closed $p$-paths $u$ and $v$ are homological and write $u\sim v$ if $u$ and $v$ represent the same homology class, that is, if $u\sim v$ is exact.

For any $p$-forms $v$ define its length by \begin{displaymath}
l (v) = \sum_{i_0,\dots,i_p\in V} \mid v^{i_0\dots i_p}\mid
\end{displaymath}

Given a closed $p$-path $v_0$, consider the minimization problem
\begin{displaymath}
l(v) \mapsto min \quad for \quad v\sim v_0
\end{displaymath}
This problem always has a solution, although not necessarily unique. Any solution is called a \textsl{minimal} $p$-path. It is hoped that minimal $p$-paths(in a given homology class) match our geometric intuition of what holes in a graph should be. We can give some examples of minimal paths to support this claim. The following is one of the example in dimension 2.

\begin{example}
Consider a digraph on Figure 2\ref{fig2}.
\begin{figure}[H]

\centering
\includegraphics[width=1.5in]{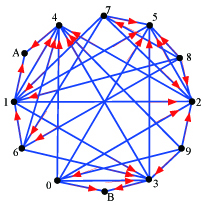}
\caption{2}
\label{fig2}
\end{figure}

Removing successively the vertices $A,B,8,9,6,7$ we obtain a digraph $(V',E')$ with $V' = \{0,1,2,3,4,5\} $ and $E'=\{02,03,04,05,12,13,14,15,24,25,34,35\}$ that has the same homologies as $(V,E)$. The digraph $(V',E')$ is shown in two ways on the following figure. Clearly the second representation of this graph is reminiscent of an octahedron.
\begin{figure}[H]
\centering
\includegraphics[width=3.0in]{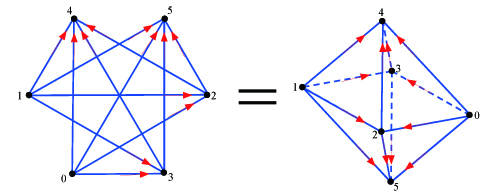}
\caption{3}
\label{fig3}
\end{figure}

The digraph $(V',E')$ is the same as the 2-dimensional sphere-graph. Hence we obtain that $dim H_2(V,E) = 1$ while $H_p(V,E) = {0}$ for $p=1$ and $p> 2$.

Consider a closed 2-path on $(V,E)$
\begin{displaymath}
\begin{split}
v_0 &= e_{024} - e_{025} -e_{034} +e_{035} -e_{124} +e_{125}\\
&+e_{134} - e_{135} -e_{634} +e_{614} -e_{613}
\end{split}
\end{displaymath}
Then a solution to the minimization problem is given by
\begin{displaymath}
v = e_{024} - e_{025} -e_{034} +e_{035} - e_{124} +e_{125}+e_{134}-e_{135}
\end{displaymath}
that is a 2-path that determines a 2-dimensional hole in $(V,E)$ given by the octahedron. Note that on Figure 2 \ref{fig2} this octahedron is hardly visible, but it can be computed purely algebraically as shown above.
\end{example}

\bibliographystyle{amsalpha}

\bigskip

\end{document}